\documentclass[11pt,a4paper]{amsart}

\usepackage{amssymb}
\usepackage[utf8]{inputenc}
\usepackage[english]{babel}
\usepackage{amsfonts,mathtools}
\usepackage{amsmath}
\usepackage{comment}
\usepackage{transparent}
\usepackage{graphicx}
\usepackage{xcolor}
\usepackage{soul}
\usepackage{tikz}
\usetikzlibrary{calc}
\setstcolor{teal}
\usepackage{todonotes}

\DeclareFontFamily{U}{mathx}{\hyphenchar\font45}
\DeclareFontShape{U}{mathx}{m}{n}{
      <5> <6> <7> <8> <9> <10>
      <10.95> <12> <14.4> <17.28> <20.74> <24.88>
      mathx10
      }{}
\DeclareSymbolFont{mathx}{U}{mathx}{m}{n}
\DeclareFontSubstitution{U}{mathx}{m}{n}
\DeclareMathAccent{\widecheck}{0}{mathx}{"71}
\DeclareMathAccent{\wideparen}{0}{mathx}{"75}

\usepackage[bookmarksopen,bookmarksdepth=3,colorlinks,citecolor=red,pagebackref,hypertexnames=false]{hyperref}
\usepackage[nobysame,non-sorted-cites, initials]{amsrefs}

\usepackage{amsthm}
\usepackage{color}
\usepackage{enumitem}
\usepackage[nameinlink]{cleveref}

\newtheorem{proposition}{Proposition}[section]
\newtheorem{theorem}[proposition]{Theorem}

\newtheorem{corollary}[proposition]{Corollary}
\newtheorem{lemma}[proposition]{Lemma}
\theoremstyle{remark}
\newtheorem{remark}[proposition]{Remark}

\theoremstyle{definition}

\newtheorem*{acknowledgements}{Acknowledgements}

\DeclareMathOperator{\supp}{supp}

\newcommand{\R}{\mathbb{R}}
\newcommand{\C}{\mathbb{C}}
\newcommand{\Z}{\mathbb{Z}}

\newcommand{\Sph}{\mathbb{S}}
\newcommand{\dd}{\mathrm{d}}

\newcommand{\Id}{\mathrm{Id}}

\newcommand{\X}{\mathrm{X}}

\def\ve{\varepsilon}
\def\lec{\lesssim}
\def\vp{\varphi}
\def\wh{\widehat}
\def\ind{\mathbf 1}
\def\cal{\mathcal}
\def\ovl{\overline}
\def\cd{\centerdot}

\def\d{\partial}

\title[Initial-to-final-state inverse problem]{The initial-to-final-state inverse problem with critically-singular potentials}

\author[M. Cañizares]{Manuel Cañizares}
\address[M.\ Cañizares]{Johann Radon Institute for Computational and Applied Mathematics (RICAM)\\ Altenbergstr. 69, 4040 Linz, Austria.}
\email{\href{mailto:manuel.canizares@ricam.oeaw.ac.at}{\textrm{manuel.canizares@ricam.oeaw.ac.at}}}

\author[P. Caro]{Pedro Caro}
\address[P. Caro]{
Basque Center for Applied Mathematics and Ikerbasque (Basque Foundation for Science) Bilbao, Spain}
\email{\href{mailto:pcaro@bcamath.org}{\textrm{pcaro@bcamath.org}}}

\author[I. Parissis]{Ioannis Parissis}
\address[I.\ Parissis]{Departamento de Matem\'aticas, Universidad del Pa\'is Vasco, Aptdo. 644, 48080 Bilbao, Spain and Ikerbasque, Basque Foundation for Science, Bilbao, Spain}
\email{\href{mailto:ioannis.parissis@ehu.eus}{\textrm{ioannis.parissis@ehu.eus}}}

\author[T. Zacharopoulos]{Thanasis Zacharopoulos}
\address[T.\ Zacharopoulos]{Department of Mathematics, Aarhus University, NY Munkegade 118, 8000 Aarhus C, Denmark}
\email{\href{mailto:thanzacharop@math.au.dk}{\textrm{thanzacharop@math.au.dk}}}

\begin{document}

\begin{abstract} The Schr\"odinger equation in high dimensions describes the evolution of a quantum system. Assume that we are given the evolution map sending each initial state $f\in L^2(\R^n)$ of the system to the corresponding final state at a fixed time $T$. The main question we address in this paper is whether this \emph{initial-to-final-state} map uniquely determines the Hamiltonian $-\Delta+V$ that generates the evolution. We restrict attention to \emph{time-independent} potentials $V$ and show that uniqueness holds provided $V \in L^1(\R^n)\cap L^q(\R^n)$, with $q>1$ if $n=2$ or $q\geq n/2$ if $n\geq 3$. This should be compared with the results of Caro and Ruiz, who proved that in the time-dependent case, uniqueness holds under the stronger assumption that the potential exhibits super-exponential decay at infinity, for both bounded and unbounded potentials.

This paper extends earlier work of the same authors, where uniqueness was obtained for bounded time-independent potentials with polynomial decay at infinity. Here we only require $L^1$-type decay at infinity and allow for $L^q$-type singularities. We reach this improvement by providing a refinement of the Kenig--Ruiz--Sogge resolvent estimate, which replaces the classical Agmon--H\"ormander estimates used previously. Crucially, the time-independent setting allows us to avoid the use of complex geometrical optics solutions and thereby dispense with strong decay assumptions at infinity.
\end{abstract}

\date{\today}

\subjclass[2020]{Primary 35R30; Secondary 35J10, 81U40.}
\keywords{inverse problems, Schr\"odinger equation, time-independent potentials,
initial-to-final-state map, uniqueness, resolvent estimates}

\maketitle

\section{Introduction}
In this paper, we study an inverse problem for the Schr\"odinger equation in which the available data consist of the map that sends any initial state $f$ at time $t=0$ to the solution at a fixed final time $t=T$.

To make this precise, let us consider the initial-value problem for the Schr\"odinger equation with a time-independent potential
\begin{equation}\label{eq:Schrodinger}
\begin{cases}
i \partial_t u = -\Delta u + V u & \text{for } (t,x)\in (0, T)\times \R^n \eqqcolon \Sigma,
\vspace{.6em}
\\
u(0,x) = f(x) & \text{for } x\in \R^n.
\end{cases}
\end{equation}
We assume throughout the paper that $V=V(x)\in L^q(\R^n)$, where $q\ge n/2$ if $n\ge 3$ or $q>1$ if $n=2$. Then, by \cites{zbMATH02204588,zbMATH00179225}, this direct problem is well posed, and for every $f\in L^2(\R^n)$ there exists a unique solution
\[
u\in C\left([0,T];L^2(\R^n)\right).
\]
The evolution associated with \eqref{eq:Schrodinger} therefore defines a bounded linear operator
\[
\mathcal U : f\in L^2(\R^n)\mapsto u\in C\left([0,T];L^2(\R^n)\right).
\]
Consequently, for any fixed time $t\in[0,T]$, the operator
\[
\mathcal U_t : f \in L^2(\R^n) \mapsto u(t,\cd)\in L^2(\R^n)
\]
is also bounded, uniformly in $t$. Solutions of the form $u=\mathcal U f$, with $f\in L^2(\R^n)$, will be referred to as \emph{physical solutions}, while we call $\mathcal U_T$ the \emph{initial-to-final-state map}.

The main question addressed in this paper is whether the initial-to-final-state map $\mathcal U_T$ uniquely determines the Hamiltonian $-\Delta+V$. This inverse problem was first studied for \emph{time-dependent} potentials in \cite{zbMATH07801151}. There, the authors show that if the potentials $V_1,V_2 \in L^1((0,T);L^\infty(\R^n))$ satisfy a \emph{super-exponential decay} condition at infinity, and if $\mathcal U_T^j$ denotes the initial-to-final-state map associated with $-\Delta+V_j$, then there holds:
\[
\mathcal U_T^1=\mathcal U_T^2 \quad\Longrightarrow\quad V_1=V_2.
\]
More recently, this uniqueness result was extended in \cite{caro2025initialtofinalstateinverseproblemunbounded} to unbounded time-dependent potentials that are allowed to exhibit local $L^q$-type singularities, but still requiring the super-exponential decay assumption at infinity.

The case of time-independent potentials was previously considered in \cite{zbMATH08122191}, where uniqueness was established under comparatively weaker decay assumptions than in the time-dependent setting, namely assuming only super-linear decay at infinity. The purpose of the present paper is to relax these assumptions even further. Specifically, we prove uniqueness for time-independent potentials that may exhibit singularities of $L^q$-type in sets of finite measure and satisfy only $L^1$-integrability in sets of infinite measure; in the time-independent setting, this represents a substantial improvement over the decay and integrability assumptions in  \cite{zbMATH08122191}, \cite{zbMATH07801151}, and \cite{caro2025initialtofinalstateinverseproblemunbounded}.

\begin{theorem}\sl \label{th:Lq} Let $V_1,V_2\in L^1(\R^n)\cap L^q(\R^n)$ be time-independent potentials, where $q>1$ if $n=2$ and $q\ge n/2$ if $n\ge 3$. Let $\mathcal U_T^1$ and $\mathcal U_T^2$ denote the corresponding initial-to-final-state maps. Then
\[
\mathcal U_T^1=\mathcal U_T^2 \quad\Longrightarrow\quad V_1=V_2.
\]
\end{theorem}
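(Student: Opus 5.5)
We derive an integral identity from the equality of the maps, then test it with solutions built from a resolvent estimate rather than complex geometrical optics.

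\textbf{Integral identity.} Let $u_1=\mathcal U^1 f$ and let $v_2$ solve the backward problem for $-\Delta+\overline{V_2}$ with final datum $v_2(T)=g$. Pairing the difference equation for $u_1$ and $u_2=\mathcal U^2 f$ against $v_2$ and integrating over $\Sigma$ gives
\[
\int_\Sigma (V_1-V_2)\,u_1\,\overline{v_2}\,\dd x\,\dd t
= i\langle (\mathcal U_T^1-\mathcal U_T^2)f, g\rangle = 0 .
\]
This holds for all physical solutions $u_1$ and all physical (backward) solutions $v_2$. The integral makes sense because $V\in L^1\cap L^q$ and Strichartz estimates put $u_1,v_2$ in $L^{p}_tL^{r}_x$. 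We justify the pairing by density and the well-posedness in \cites{zbMATH02204588,zbMATH00179225}.

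\textbf{Separating time.} Since the potentials are time independent, we reduce to stationary objects. For $\lambda\in\R$ we look for physical solutions of the form $u_1=e^{-i\lambda t}w_1(x)+r_1(t,x)$, with $w_1$ a generalized eigenfunction satisfying $(-\Delta+V_1-\lambda)w_1=0$. We build $w_1$ by the Lippmann--Schwinger ansatz
\[
w_1=e^{ix\cdot\xi}-R_0(\lambda\pm i0)\big(V_1 w_1\big), \qquad |\xi|^2=\lambda .
\]
We solve it by a Neumann series, or by Fredholm theory together with a smallness or high-frequency argument. The resolvent must be bounded in weighted or $L^p$ spaces adapted to $V\in L^1\cap L^q$.

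\textbf{Main obstacle.} This is the refinement of the Kenig--Ruiz--Sogge estimate
\[
\|R_0(z)F\|_{L^{p'}}\lesssim |z|^{\frac n2(\frac1p-\frac1{p'})-1}\|F\|_{L^p},
\]
upgraded so that multiplication by $V$ is small for large $|z|$ when $V\in L^1\cap L^q$. We will also need a weighted variant that produces a decaying remainder. We plan to split $V$ into a bounded compactly supported part, which gains decay in $|\lambda|$, and a small $L^{n/2}$ part, which is handled by the uniform estimate. Physical solutions with $L^2$ initial data are not exactly time-harmonic, since $e^{ix\cdot\xi}\notin L^2$. So we superpose in $\lambda$ (or in $\xi$) against a smooth compactly supported weight, and truncate in $x$ by wave packets. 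The resulting errors vanish in the identity because of the $L^1$ decay of $V$.

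\textbf{Conclusion.} Substituting $u_1\approx e^{-i\lambda t}e^{ix\cdot\xi}$ and $v_2\approx e^{-i\mu t}e^{ix\cdot\eta}$ into the identity, the time integral produces a factor $\int_0^T e^{-i(\lambda-\mu)t}\,\dd t$. Choosing $|\xi|^2=|\eta|^2$ makes this factor equal to $T\neq0$. The spatial integral is then $\widehat{(V_1-V_2)}(\eta-\xi)$ plus remainders. Every $k\in\R^n$ can be written as $k=\eta-\xi$ with $|\xi|=|\eta|$ arbitrarily large (using $n\ge2$). The remainders tend to $0$ by the resolvent bounds above. Hence $\widehat{V_1-V_2}\equiv0$, and $V_1=V_2$.
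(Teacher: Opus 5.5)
There is a genuine gap: you never justify passing from the identity for physical solutions to the stationary states $e^{-i\lambda^2 t}(e^{-i\lambda\omega\cdot x}+w^{\mathrm{cor}})$, and that is exactly the step where you read off $\widehat{V_1-V_2}$. Your plan is to superpose generalized eigenfunctions, truncate by wave packets, and let the $L^1$ decay of $V$ absorb the errors. This is not an argument, and neither step is justified with the tools you list.

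\emph{Superposition.} $\int\phi(\xi)e^{-i|\xi|^2t}w_\xi\,\mathrm{d}\xi$ is a physical solution only if $\int\phi(\xi)w^{\mathrm{cor}}_\xi\,\mathrm{d}\xi\in L^2(\mathbb{R}^n)$. Each $w^{\mathrm{cor}}_\xi$ is in general only in $L^p$ with $p>2$, so this needs smoothness of $(\mathrm{Id}-P_\lambda\circ V)^{-1}$ in the energy parameter. That is an eigenfunction-expansion or limiting-absorption theory, usually proved in weighted spaces under extra decay, and it does not follow from KRS-type bounds for $V\in L^1\cap L^q$. Moreover, the potentials may be complex (note $\overline{V_2}$ in the adjoint problem), so there is no spectral theorem to fall back on.

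\emph{Truncation.} Truncating in $x$ destroys the equation. The plane-wave part of the commutator $2\nabla\chi(\epsilon\,\cdot)\cdot\nabla\psi$ has $L^s_x$-norm of order $\epsilon^{1-n/s}\lambda\gtrsim\lambda$ for every $s\le 2$. So it is not small in any dual Strichartz norm. The Schr\"odinger flow also does not keep the resulting error near $|x|\sim\epsilon^{-1}$, which is the only region where the smallness of $V_1-V_2$ could help.

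The missing idea is to use $\mathcal U_T^1=\mathcal U_T^2$ a second time, to produce auxiliary solutions vanishing at both $t=0$ and $t=T$. Non-$L^2$ solutions can then be integrated by parts against these with no boundary terms. For a physical $\Psi$ with potential $V_1$, solve $(i\partial_t+\Delta-V_2)\Phi=(V_1-V_2)\Psi$ with $\Phi(0,\cdot)=0$. Orthogonality against every backward physical solution (Proposition~\ref{prp:CR_prop4.1_unbounded}) forces $\Phi(T,\cdot)=0$ (Lemma~\ref{lem:CR_lemma4.4_Lp}). Then $\int_\Sigma(V_1-V_2)\Psi\overline{\psi_2}=\int_\Sigma(i\partial_t+\Delta-V_2)\Phi\,\overline{\psi_2}$, and the operator is moved onto $\psi_2$ in two pieces.
\begin{itemize}
\item Onto the plane wave: only the free plane wave is approximated, by $e^{it\Delta}(\chi(\epsilon x)e^{-i\lambda\omega\cdot x})$, which converges pointwise and boundedly. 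Dominated convergence then uses $(i\partial_t+\Delta)\Phi\in L^1(\Sigma)$, which is where $V_j\in L^1$ enters.
\item Onto $e^{-i\lambda^2t}w_2^{\mathrm{cor}}\in C([0,T];L^p)$: by mollifying and truncating (Proposition~\ref{cor:integration_by_parts_unbounded_no-endopint2}). At $p=p_n$ the truncation radius must be tuned to the tail of $w_2^{\mathrm{cor}}$.
\end{itemize}
A symmetric step with a backward $\Xi$, $\Xi(T,\cdot)=0$, shown to satisfy $\Xi(0,\cdot)=0$, then replaces $\Psi$ by $\psi_1$.

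Separately, at the endpoint, the ``decay'' you expect from the bounded compactly supported part cannot come from the diagonal bound $L^{p_n'}\to L^{p_n}$. That bound has no decay however nice $V$ is. You need an off-diagonal estimate such as $\lambda^{\frac1{n+1}}\|P_\lambda f\|_{L^{p_n}}\lesssim\|f\|_{L^{q_n'}}$, which the paper proves by Bernstein and Littlewood--Paley. With it, your fixed splitting does work on $L^{p_n}$ alone, via $\|V_{\mathrm{good}}F\|_{L^{q_n'}}\le\|V_{\mathrm{good}}\|_{L^{s}}\|F\|_{L^{p_n}}$ with $\frac1s=\frac1{q_n'}-\frac1{p_n}$. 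The paper instead uses the $\lambda$-dependent splitting behind $\|V\|_\lambda$ and the space $X_\lambda^*=L^{q_n}\cap L^{p_n}$. A weighted-$L^2$ substitute would typically reintroduce decay assumptions beyond $L^1$.
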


\medskip
\noindent\textbf{Outline of the proof.}
The proof of Theorem~\ref{th:Lq} proceeds by extracting information on the difference $V_1-V_2$ from $\mathcal U_T^1 = \mathcal U_T^2$, by testing it against suitable families of solutions.

The first step is to show that the equality $\mathcal U_T^1=\mathcal U_T^2$ yields an \emph{Alessandrini-type orthogonality relation}, in the sense of \cite{Aless}, of the form
\begin{equation}\label{eq:orthogonality_intro}
\int_{\Sigma} (V_1-V_2)\,u_1\,\overline{v_2}=0,
\end{equation}
valid for pairs of solutions associated with the potentials $V_1$ and $\ovl{V_2}$. This identity is initially available only for \emph{physical solutions}, namely solutions belonging to $C([0,T];L^2(\R^n))$. 

At this stage, our first obstruction is that in order to recover pointwise information on $V_1-V_2$, we need to test \eqref{eq:orthogonality_intro} against special \emph{time-harmonic} solutions. Such solutions can be constructed as perturbations of time-harmonic solutions of the  free Schr\"odinger equation; we refer to such solutions as \emph{stationary states} associated with the given potential. However, these stationary states are not in $C([0, T]; L^2(\R^n))$, and using them as test functions requires a substantial extension of the Alessandrini-type orthogonality relation beyond physical solutions.

More precisely, stationary states are obtained by inverting a resolvent-type operator and constructing correction terms via Neumann series. Concretely, they are of the form
\[
u(t,x)=e^{-i|\kappa|^2 t}\left(e^{-i\kappa\cdot x}+w^{\mathrm{cor}}(x)\right),
\]
where the leading term \(e^{-i|\kappa|^2 t}e^{-i\kappa\cdot x}\) is a time-harmonic solution of the free Schr\"odinger equation, and the correction \(w^{\mathrm{cor}}\) accounts for the presence of the potential \(V\).

The construction of \(w^{\mathrm{cor}}\) reduces to inverting an operator of the form \(\mathrm{Id}-P_\lambda\circ V\), where \(P_\lambda\) denotes a solution operator for the Helmholtz equation \((\Delta+\lambda^2)u=f\). This inversion is carried out on suitable function spaces, chosen so that the operator \(P_\lambda\circ V\) is small, in a suitable sense.

In following the proof strategy outlined above, we encounter a second important obstruction. In the non-endpoint regime $V\in L^q$ with $q>n/2$, decay in the energy parameter $\lambda$ follows from the classical Kenig--Ruiz--Sogge resolvent estimate. The latter ensures that $P_\lambda\circ V$ is small on the relevant function spaces for large $\lambda$. At the critical endpoint $q=n/2$, however, this estimate no longer yields decay in $\lambda$, and the smallness of $P_\lambda\circ V$ cannot be obtained from the standard resolvent bound alone.

To address this issue, we introduce a new scale of Banach spaces that allows us to recover a decaying factor in the energy parameter, also at the endpoint. Within this framework, we establish an improved form of the Kenig--Ruiz--Sogge resolvent estimate, sufficient to construct stationary states for critical potentials despite the absence of quantitative decay at the endpoint. A similar idea has appeared in \cite{CaroGarcia} for a related scattering problems with critically singular potentials; see also \cite{zbMATH07867333}. When inserted into the extended Alessandrini-type orthogonality relation, these stationary states allow us to isolate the Fourier phase and hence recover the Fourier transform of $V_1-V_2$, proving uniqueness.

Throughout the argument, the assumption of a stationary potential plays a crucial role. Indeed, in the time-dependent setting, the uniqueness results in \cites{zbMATH07801151,caro2025initialtofinalstateinverseproblemunbounded} rely on constructing complex geometrical optics solutions whose leading terms are of the form
\[
(t,x) \longmapsto e^{it|\kappa|^2} e^{\kappa\cdot x}, \qquad \kappa \in \R^n,
\]
supplemented by correction terms depending on the potential. These complex exponentials grow in certain directions.  Using such solutions in an orthogonality relation of the form \eqref{eq:orthogonality_intro} and controlling the corresponding correction terms naturally leads to assuming super-exponential decay of the potential at infinity.

However, in the time-independent case considered here, we work with time-harmonic solutions whose main term is given in the form
\[
(t,x) \longmapsto e^{-i|\kappa|^2 t} e^{-i\kappa\cdot x}, \qquad \kappa \in \R^n,
\]
ignoring again the perturbative terms correcting for the potential. Now the exponents are imaginary, so the leading term is purely oscillatory and unimodular. This allows us to restrict attention to a more regular class of solutions, which however is still sufficient to establish uniqueness in the case of time-independent potentials.

Inverse problems associated with the dynamical Schrödinger equation have been extensively studied; see, for example, \cites{zbMATH01886353,zbMATH06733553,zbMATH05549395,zbMATH05655673,zbMATH05839237,zbMATH06864429}. Many of these works have also considered time-dependent Hamiltonians \cites{zbMATH06769718,zbMATH06516179,zbMATH05379127,zbMATH07033617,zbMATH07242805}. A common feature of these investigations is the use of a dynamical Dirichlet-to-Neumann map, recorded on the boundary of a domain that contains the non-constant portions of the Hamiltonian. 

This formulation stands in contrast to the one presented in our work. Here, the variable part of the Hamiltonian is not localized to a bounded region but is possibly present throughout the whole space. Moreover, our inverse problem is formulated with a distinct data requirement: knowledge only of the system's initial state and its corresponding state at a final time.

The rest of the paper is organized as follows. Section~\ref{sec:aux} collects background material and auxiliary analytic tools, including well–posedness results and resolvent estimates. In Section~\ref{sec:timeindependent_solutions} we construct stationary states for the Schr\"odinger equation with time-independent potentials, treating both the non-endpoint case $q>n/2$ and the endpoint case $q=n/2$ in dimensions $n\ge3$. Section~\ref{sec:orthorelation} is devoted to extending the Alessandrini-type orthogonality relation beyond physical solutions, and in particular to stationary-state solutions. Finally, in Section~\ref{sec:unbounded_endpoint} we prove Theorem~\ref{th:Lq} by applying the extended orthogonality relation to the stationary states constructed earlier.

\section{Preliminaries: Function spaces, Resolvent and Strichartz  estimates} \label{sec:aux} In this section we collect the basic functional-analytic tools and estimates used throughout the paper.

Throughout the paper, we use the Fourier transform on $\R^n$ defined for $f\in L^1(\R^n)$ by
\[
\widehat f(\xi) \coloneqq \frac{1}{(2\pi)^{n/2}} \int_{\R^n} f(x) e^{-i x\cdot \xi}\, \dd x,
\qquad \xi\in\R^n,
\]
and extended in the usual way to tempered distributions, in particular to $L^p(\R^n)$ for $1<p\le2$.

For $p,r\in[1,\infty]$ and an open time interval $I\subset\R$, we use the Banach spaces $L^r\left (I;L^p(\R^n)\right )$ and $C\left (\overline{I};L^p(\R^n)\right )$, equipped with the norms
\begin{equation}\label{eq:Bvalued_norms}
\|u\|_{L^r\left(I;L^p(\R^n)\right)} \coloneqq
\left( \int_I \|u(t,\cd)\|_{L^p(\R^n)}^r\,\dd t \right)^{1/r}, \qquad \|u\|_{C\left(\overline{I};L^p(\R^n)\right)} \coloneqq \sup_{t\in I}\|u(t,\cd)\|_{L^p(\R^n)}.
\end{equation}

We will also use intersections of mixed-norm spaces of the form
\[
L^{r_1}\left (I;L^{p_1}(\R^n)\right )\cap L^{r_2}\left(I;L^{p_2}(\R^n)\right), \textnormal{ and } L^{p_1}(\R^n)\cap L^{p_2}(\R^n)
\]
endowed with the norm
\[
\max\left( \|u\|_{L^{r_1}\left(I;L^{p_1}(\R^n)\right)}, \|u\|_{L^{r_2}\left(I;L^{p_2}(\R^n)\right)}\right) , \textnormal{ and } \max\left( \|u\|_{L^{p_1}(\R^n)}, \|u\|_{L^{p_2}(\R^n)}\right)
\]
respectively.

\subsection{The Kenig--Ruiz--Sogge resolvent estimate}\label{sec:KRS} For $\lambda>0$, we consider the Helmholtz equation
\begin{equation}\label{eq:Helm}
(\Delta+\lambda^2)u=f \qquad \text{in } \R^n.
\end{equation}
For $f\in\mathcal S(\R^n)$, we choose the associated solution operator defined by
\begin{equation}\label{eq:soloper}
P_\lambda f(x) \coloneqq \frac{1}{(2\pi)^{n/2}} \,\mathrm{p.v.}\!\int_{\R^n} \frac{e^{ix\cdot\xi}}{\lambda^2-|\xi|^2} \widehat f(\xi)\,\dd\xi, \qquad x\in\R^n.
\end{equation}
Then $u\coloneqq P_\lambda f$ is a distributional solution of \eqref{eq:Helm}.

We denote by $q_n$ the Lebesgue exponent arising from the Tomas--Stein extension theorem, \cites{stein,Tomas}, defined by
\begin{equation}\label{eq:extLq}
\frac{1}{q_n} \coloneqq \frac{1}{2}-\frac{1}{n+1}=\frac{n-1}{2(n+1)}, \qquad n\geq 2.
\end{equation}
This exponent appears naturally in the analysis of the Helmholtz equation \eqref{eq:Helm} below, and in particular in the $L^p$-bounds for its solution operator.

Let $p_n$ denote the Sobolev exponent for the embedding $\dot H^1(\R^n)\hookrightarrow L^{p_n}(\R^n)$, namely
\[
\frac{1}{p_n}=\frac12-\frac1n=\frac{n-2}{2n}, \qquad n\geq 3.
\]
The following quantitative estimate for $P_\lambda$ is obtained from the work of Kenig, Ruiz, and Sogge \cite{zbMATH04050093} by rescaling:
\begin{equation}\label{eq:KRS}
\|P_\lambda f\|_{L^p(\R^n)} \lesssim \frac{1}{\lambda^{2n(\frac1p-\frac1{p_n})}} \|f\|_{L^{p'}(\R^n)} \quad\text{for}\quad
\begin{cases} 
q_n\leq p\le p_n, & n\geq3, 
\vspace{.6em}
\\ 
q_n\leq p<\infty, & n=2,
\end{cases}
\end{equation}
where the implicit constant depends only on $p$ and $n$. From here on, $p'$ denotes the H\"older conjugate exponent of $p$.

\subsection{Strichartz pairs and well-posedness}\label{subsec: Stri_WP} We assume $n\geq 2$ throughout the paper. We will say that the pair $(r, p)$ is a \emph{Strichartz pair} if 
\[
\frac{2}{r} + \frac{n}{p} = \frac{n}{2}, \quad \textup{with}\quad r, p \in [2, \infty] \quad  \mathrm{and} \quad (n, r, p) \neq (2, 2, \infty).
\]
Note that, in dimension $n=2$, the endpoint pair $(2,\infty)$ is not admissible.

We consider the Schr\"odinger equation in $\Sigma\coloneqq [0,T]\times \R^n$ with a time-independent potential $V=V(x)$ and a force term $F=F(t,x)$ as follows
\begin{equation}\label{eq:schfull}
\begin{cases}
(i\partial_t +\Delta)u-Vu=F,\quad &(t,x)\in \Sigma,
\vspace{.6em}
\\
u(0,\cd)=f, \quad &\textrm{in}\quad \R^n.
\end{cases}
\end{equation}
Assume that the potential $V$ is time-independent and satisfies $V\in L^q(\R^n)$, where $q\geq n/2$ if $n\ge3$ and $q>1$ if $n=2$. Let $(r,p)$ and $(\tilde r,\tilde p)$ be Strichartz pairs satisfying $\frac{1}{q}+\frac{2}{p}=1$. Then, for every $f\in L^2(\R^n)$ and $F\in L^{\tilde r'}((0,T);L^{\tilde p'}(\R^n))$, the initial value problem \eqref{eq:schfull} admits a unique solution
\[
u\in C\left([0,T];L^2(\R^n)\right)\cap L^r\left((0,T);L^p(\R^n)\right);
\]
see Figure~\ref{fig:str}. This follows from the existence and uniqueness results for critical potentials proved by Ruiz--Vega \cite{zbMATH00179225} and Ionescu--Kenig \cite{zbMATH02204588}.

Throughout the paper, the integrability exponents for the potential and the solutions are linked by the H\"older relation
\[
\frac{1}{q}+\frac{1}{p}+\frac{1}{p}=1.
\]
Under this assumption, the following bounds hold:
\begin{equation}\label{eq:holder}
\left|\int_{\R^n} V u v\right| \leq \|V\|_{L^q(\R^n)} \|u\|_{L^p(\R^n)}\|v\|_{L^p(\R^n)},\qquad \|Vu\|_{L^{p'}(\R^n)}\leq \|V\|_{L^q(\R^n)}\|u\|_{L^p(\R^n)}.
\end{equation}
If, in addition, $r\geq 2$, then for any time interval $(0,T)$ one also has
\begin{equation}\label{eq:holder1}
\|Vu\|_{L^{r'} \left((0,T);L^{p'}(\R^n)\right)} \leq T^{\frac{1}{r'}-\frac{1}{r}} \|V\|_{L^q(\R^n)} \|u\|_{L^r \left((0,T);L^p(\R^n)\right)}.
\end{equation}

\begin{remark}\label{rmrk:VuinLp'} Let $V\in L^q(\R^n)$ be time independent, with $q>1$ if $n=2$ or $q\geq n/2$ if $n\geq3$, and let $(r,p)$ be a Strichartz pair satisfying $\frac1q+\frac2p=1$. If $u\in C([0,T];L^2(\R^n))\cap L^r((0,T);L^p(\R^n))$ solves \eqref{eq:schfull} with forcing term $F\in L^{r'}((0,T);L^{p'}(\R^n))$, then \eqref{eq:holder1} implies
\[
(i\partial_t+\Delta)u = F+Vu \in L^{r'}\big((0,T);L^{p'}(\R^n)\big).
\]
In particular, this holds for the endpoint Strichartz pair $(r_n,p_n)=(2,\frac{2n}{n-2})$ when $n\ge3$. This property will be used repeatedly in Section~\ref{sec:orthorelation} and uses the fact that $V$, or the inverse problem itself, is only considered throughout a time interval of finite length, $T<\infty$.
\end{remark}

\begin{figure}[htb]
\centering
\def\svgwidth{300pt}
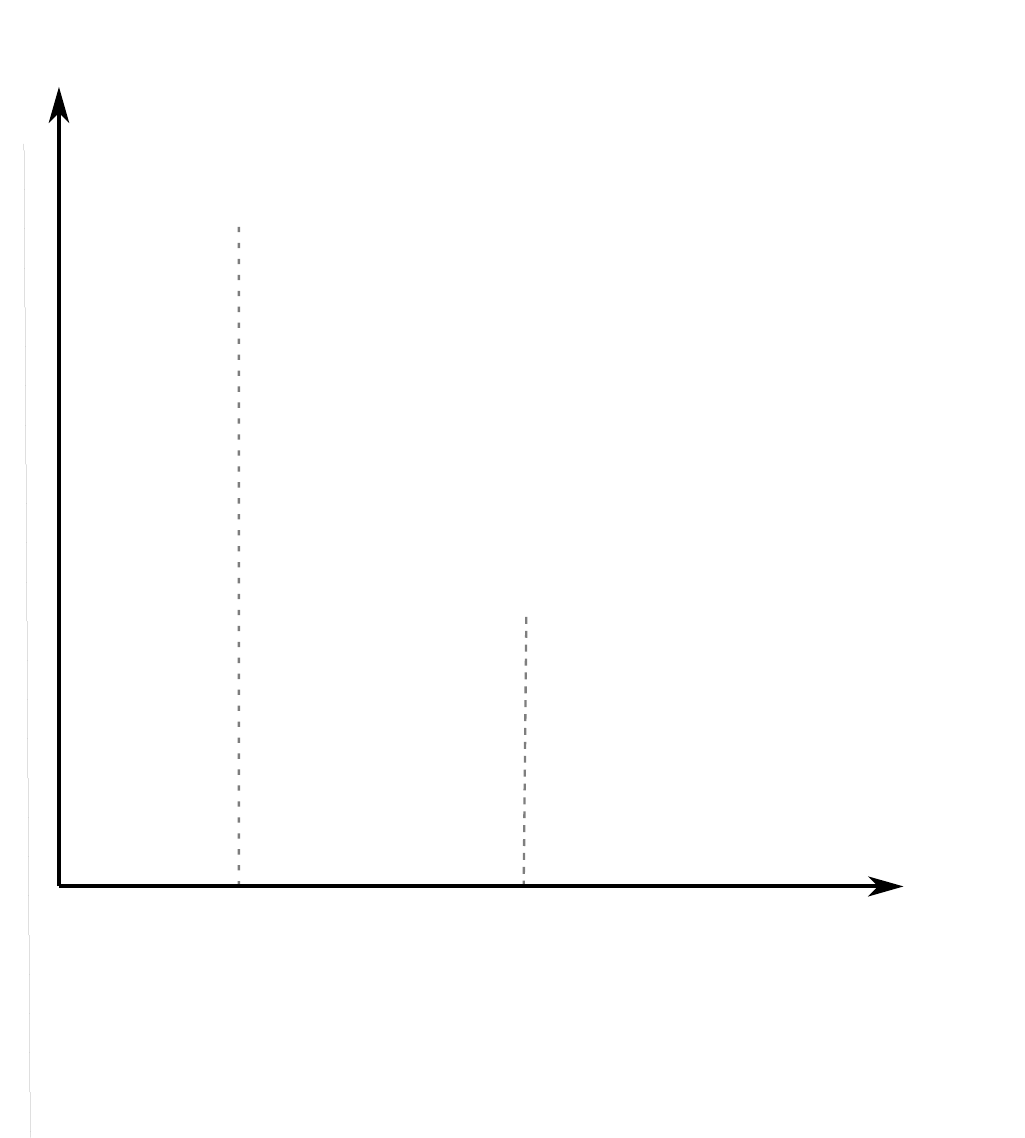
\vspace{-5em}
\caption[Strichartz line]{\small The exponents for the space of potentials $L^q(\R^n)$, the Strichartz pairs $(r,p)$, and the Stein-Tomas extension estimate $L^2(\mathbb S^{n-1})\to L^{p}(\R^n)$ for $p\geq q_n$. The Kenig--Ruiz--Sogge estimate is valid in the range $p\in [q_2,\infty)$ when $n=2$ and $p\in[q_n,p_n]$ when $n\geq 3$.} \label{fig:str}
\end{figure}

\begin{remark}\label{rmrk:numer} A brief comment on the role of the exponents is in order. The Kenig--Ruiz--Sogge estimate \eqref{eq:KRS} holds in the range
\[
q_n \le p \leq p_n \quad \text{if } n\geq3, \qquad q_n \le p < \infty \quad \text{if } n=2.
\]
Imposing the H\"older relation $\frac{2}{p}+\frac{1}{q}=1$, this corresponds to
\[
\frac{n}{2} \leq q \leq \frac{n+1}{2} \quad \text{if } n\geq3, \qquad 1< q \leq \frac{3}{2} \quad \text{if } n=2.
\]
In dimensions $n\ge3$, the endpoint $q=n/2$ corresponds to $p=p_n$, and in this case \eqref{eq:KRS} no longer yields decay in the parameter $\lambda$, which necessitates a separate treatment based on a refinement of \eqref{eq:KRS}, developed in Section~\ref{sec:timeindependent_solutions} below. By contrast, the upper endpoint $q=(n+1)/2$, corresponding to $p=q_n$, does not pose any difficulty in our setting. Indeed, since throughout the paper we assume $V\in L^1(\R^n)\cap L^q(\R^n)$, any potential with $q>(n+1)/2$ automatically belongs to $L^1(\R^n)\cap L^{(n+1)/2}(\R^n)$, with the norm bound
\[
\|V\|_{L^1(\R^n)\cap L^{\frac{n+1}{2}}(\R^n)} \leq \|V\|_{L^1(\R^n)\cap L^q(\R^n)}.
\]
Thus, without loss of generality, we may always restrict attention to the range $q\leq (n+1)/2$.
\end{remark}

\section{Solutions of the time-independent Schr\"odinger equation}\label{sec:timeindependent_solutions}
In this section we construct the special solutions, called stationary states, that will be used in the orthogonality relation~\eqref{eq:orthogonality_intro} to prove Theorem~\ref{th:Lq} in Section~\ref{sec:unbounded_endpoint}. Specifically, we consider functions $\psi$ solving the Schr\"odinger equation
\begin{equation}\label{sch_timeind}
(i\partial_t +\Delta-V)\psi =0 \qquad\mathrm{in}\quad \R\times \R^n,\quad n\geq 2,
\end{equation}
of the form
\begin{equation}\label{eq:steadysol}
\psi_V ^{\lambda,\omega}(t,x)\coloneqq e^{-i\lambda^2 t}(e^{-i\lambda \omega\cdot x}+w^{\mathrm{cor}} (x))\eqqcolon e^{-i\lambda^2 t}\big(w^{(0)}(x)+w^{\mathrm{cor}}(x)\big),\qquad  (t,x)\in\R\times \R^n,
\end{equation}
where $\omega\in\mathbb S^{n-1}$ and  $\lambda>0$. Of course, the functions $w^{(0)},w^{\mathrm{cor}}$ also depend on $\lambda,\omega$ and $V$ but we suppress this dependence in order to simplify the notation. We will also use the notation
\[
\psi_V ^{\lambda,\omega}(t,x)=e^{-i\lambda^2 t}w(x),\qquad w(x)\coloneqq w^{(0)}(x)+w^{\mathrm{cor}}(x),\qquad (t,x)\in\R\times \R^n.
\]

In order for $\psi$ to solve \eqref{sch_timeind} we check that $w^{\mathrm{cor}}$ has to satisfy
\[
\left(\Delta+\lambda^2-V\right)(w^{(0)}+w^{\mathrm{cor}})=0\qquad\mathrm{in}\quad\R^n,
\]
or equivalently
\[
\left(\Delta+\lambda^2\right)w^{\mathrm{cor}}=Vw^{(0)}+Vw^{\mathrm{cor}}\qquad\mathrm{in}\quad\R^n.
\]
By the discussion in \S\ref{sec:KRS} and with the notation therein we note that the above partial differential equation will be satisfied if
\[
w^{\mathrm{cor}}=P_\lambda(Vw^{(0)}+Vw^{\mathrm{cor}}),
\]
where $P_\lambda$ is the solution operator defined in \eqref{eq:soloper}. Such $w^{\mathrm{cor}}$ would thus satisfy
\[
\left(\mathrm{Id}-P_\lambda \circ V\right)w^{\mathrm{cor}}
= P_\lambda(Vw^{(0)}).
\]
We will therefore be able to construct $w^{\mathrm{cor}}$ once we manage to invert the operator $\mathrm{Id}-P_\lambda\circ V$ in $\mathcal L(X^*)$ for a suitable Banach space $X^*$.

Throughout this section we assume that
\[
V\in L^1(\R^n)\cap L^q(\R^n),\qquad  \frac n2 \leq q\leq \frac {n+1}{2};
\]
see Remark~\ref{rmrk:numer} concerning the range of $q$. Since $w^{(0)}$ is unimodular, we have $Vw^{(0)}\in L^1(\R^n)\cap L^q(\R^n)$. Consequently, in order to construct $w^{\mathrm{cor}}$ we will choose a Banach space $X^*$ such that
\[
P_\lambda\left(L^1(\R^n)\cap L^q(\R^n)\right)\subseteq X^*,
\]
and for which $\mathrm{Id}-P_\lambda\circ V$ can be inverted.

We henceforth divide the discussion and the corresponding choice of $X^*$ according to whether $q>n/2$ for $n\geq 2$, corresponding to the non-endpoint case, or $q=n/2$ for $n\geq 3$, corresponding to the critical case.

\subsection{Non-endpoint solutions for \texorpdfstring{$q>n/2$}{q>n/2}} Consider a \emph{time-independent} potential $V\in L^1(\R^n)\cap L^q(\R^n)$ for some $n/2<q\leq (n+1)/2$, and define  $p$ by the H\"older relation
\[
\frac{1}{q}+\frac{2}{p}=1.
\]
We begin by noting that $1<p'<q$ and thus $L^1(\R^n)\cap L^q(\R^n)\subset L^{p'}(\R^n)$. Remembering the mapping properties of $P_\lambda$ from \eqref{eq:KRS} we have that 
\[
P_\lambda\left(L^1(\R^n)\cap L^q(\R^n)\right)\subset P_\lambda\big (L^{p'}(\R^n)\big)\subset L^p(\R^n).
\]
Thus we will be able to construct the correction term $w^{\mathrm{cor}}$ in $L^p(\R^n)\eqqcolon X^*$ once we manage to show that the operator $\mathrm{Id}-P_\lambda\circ V$ is invertible in $\mathcal L(X^*)=\mathcal L(L^p(\R^n))$. This is the content of the following lemma.

\begin{lemma}\label{lem:I-P inverse in Lp}\sl Let $n\geq 2$ and $V \in L^q(\R^n) $ for some $n/2< q \leq (n+1)/2$. Define $p$ via $\frac{1}{q} = 1 - \frac{2}{p} $. There exists a constant $\lambda_V$ depending on $n,q$ and $\|V\|_{L^q(\R^n)}$ such that for every $ \lambda > \lambda_V $, the operator $ \Id - P_\lambda \circ V $ has a bounded inverse in $\mathcal{L}(L^p(\R^n))$. Moreover, if this inverse is denoted by $ ( \Id - P_\lambda \circ V )^{ -1 }$ we have that
\[
 \left\| ( \Id - P_\lambda \circ V )^{-1} \right\|_{\mathcal{L}(L^p(\R^n))} \leq  2 
\]
for all $\lambda \geq 2\lambda_V $.
\end{lemma}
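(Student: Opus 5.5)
The plan is a Neumann series argument based on the Kenig--Ruiz--Sogge bound \eqref{eq:KRS} combined with the H\"older inequality \eqref{eq:holder}. I first check that the exponent $p$ lies in the range where \eqref{eq:KRS} applies. The H\"older relation $\frac1q+\frac2p=1$ gives $\frac1p=\frac12-\frac1{2q}$. For $n/2<q\le(n+1)/2$ this yields $\frac1{p_n}=\frac12-\frac1n<\frac1p\le\frac12-\frac1{n+1}=\frac1{q_n}$ when $n\ge3$. For $n=2$ with $1<q\le 3/2$ it yields $0<\frac1p\le\frac16=\frac1{q_2}$. In both cases $p$ falls in the admissible range of \eqref{eq:KRS}, and we stay strictly away from $p_n$.

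Next I bound the operator $P_\lambda\circ V$ on $L^p(\R^n)$. Take $u\in L^p(\R^n)$. By \eqref{eq:holder}, $\|Vu\|_{L^{p'}}\le\|V\|_{L^q}\|u\|_{L^p}$. Then \eqref{eq:KRS} gives
\[
\|P_\lambda(Vu)\|_{L^p(\R^n)}\le C_{n,p}\,\lambda^{-\gamma}\,\|V\|_{L^q(\R^n)}\|u\|_{L^p(\R^n)},\qquad \gamma\coloneqq 2n\Big(\frac1p-\frac1{p_n}\Big).
\]
A direct computation gives $\gamma=2n\big(\frac1n-\frac1{2q}\big)=2-\frac nq$. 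This is strictly positive precisely because $q>n/2$; the same formula is valid for $n=2$, where $\gamma=2-2/q>0$. Here $P_\lambda$ is initially defined on Schwartz functions and extended to $L^{p'}$ by density using \eqref{eq:KRS}, so the composition is well defined and bounded.

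Finally I choose $\lambda_V$. Set $\lambda_V\coloneqq\big(2C_{n,p}\|V\|_{L^q}\big)^{1/\gamma}$, which depends only on $n$, $q$ and $\|V\|_{L^q}$. For $\lambda>\lambda_V$ we get $\|P_\lambda\circ V\|_{\mathcal L(L^p)}<1/2$. The Neumann series $\sum_{k\ge0}(P_\lambda\circ V)^k$ then converges in $\mathcal L(L^p(\R^n))$ and is a two-sided inverse of $\Id-P_\lambda\circ V$. Its norm is at most $\big(1-\|P_\lambda\circ V\|\big)^{-1}\le2$. This bound holds for every $\lambda>\lambda_V$, and in particular for every $\lambda\ge2\lambda_V$; with this choice of $\lambda_V$ the factor $2$ between the two thresholds in the statement is not actually needed. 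The only genuine point in the argument is the strict positivity of the decay exponent $\gamma=2-n/q$. This is exactly what fails at the endpoint $q=n/2$, and there the argument would break down.
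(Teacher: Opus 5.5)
Your proposal is correct and follows essentially the same route as the paper: you check that $p=2q'\in[q_n,p_n)$, combine the Kenig--Ruiz--Sogge bound \eqref{eq:KRS} with the H\"older inequality \eqref{eq:holder} to get $\|P_\lambda\circ V\|_{\mathcal L(L^p(\R^n))}\lesssim \lambda^{-(2-n/q)}\|V\|_{L^q(\R^n)}$, and conclude with a Neumann series. Your explicit choice of $\lambda_V$, which makes this operator norm smaller than $1/2$, just spells out the ``standard argument'' the paper leaves implicit, and it gives the bound $2$ on the whole range $\lambda>\lambda_V$.
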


\begin{proof} We just note that under the assumptions on $q$ we have that $p=2q' \in[q_n,p_n)$;
hence the Kenig--Ruiz--Sogge estimate \eqref{eq:KRS} applies. Thus
\[
\left \|(P_\lambda \circ V )F\right\|_{L^p(\R^n)}\lesssim \frac{1}{\lambda^{2n\left(\frac1p-\frac{1}{p_n}\right)}} \|VF\|_{L^{p'}(\R^n)}\leq \frac{1}{\lambda^{n\left(\frac2n-\frac{1}{q}\right)}}\|V\|_{L^q(\R^n)}\|F\|_{L^p(\R^n)}
\]
by \eqref{eq:holder}, the definition of $p_n$ and the relation between $p,q$. Since $V\in  L^q(\R^n)$ it follows that $\|P_\lambda \circ V\|_{\mathcal L(L^p(\R^n))}<1$ if $\lambda>\lambda_V$ for sufficiently large $\lambda_V$ as in the statement of the lemma. The proof is now completed by a standard argument involving Neumann series.
\end{proof}

Returning to the solutions in \eqref{eq:steadysol}, we can now use the invertibility of $\mathrm{Id}-P_\lambda\circ V$ to complete the construction of the correction term $w^{\mathrm{cor}}$. In the statement of the corollary below, the additional assumption $V\in L^1(\R^n)$ is used only to ensure that $Vw^{(0)}\in L^{p'}(\R^n)$, so that $P_\lambda(Vw^{(0)})$ is well defined and belongs to $L^p(\R^n)$; the invertibility of $\mathrm{Id}-P_\lambda\circ V$ on $L^p(\R^n)$ relies solely on the $L^q$ assumption.
\begin{corollary}\label{cor:solutions}\sl Let $n\geq 2$ and consider a time-independent potential $V\in L^1(\R^n)\cap L^q(\R^n)$ for some $n/2<q\leq (n+1)/2$, and let $p$ be defined via $\frac{1}{q} = 1 - \frac{2}{p} $. For $\lambda > 2\lambda_V$ with $\lambda_V$ as in the statement of Lemma~\ref{lem:I-P inverse in Lp}, the functions
\[
w(x)\coloneqq e^{-i\lambda\omega\cdot x}+w^{\mathrm{cor}}(x)= w^{(0)}(x)+w^{\mathrm{cor}}(x),\qquad w^{\mathrm{cor}}(x)\coloneqq (\mathrm{Id}-P_\lambda\circ V)^{-1}[P_\lambda(Vw^{(0)})],
\]
with $ \omega\in\mathbb S^{n-1}$ and  $x\in\R^n$,
are weak solutions of the time-independent Schr\"odinger equation 
\[
(\Delta+\lambda^2-V)w=0\quad\mathrm{in}\quad\R^n.
\]
Additionally, for $\lambda>2\lambda_V$ we have the estimate
\[
\left \|w^{\mathrm{cor}}\right\|_{L^p(\R^n)}\lesssim \frac{1}{\lambda^{2n\left(\frac1p-\frac{1}{p_n}\right)}} \|V\|_{L^{p'}(\R^n)}\leq \frac{1}{\lambda^{2n\left(\frac1p-\frac{1}{p_n}\right)}} \|V\|_{L^1(\R^n)\cap L^q(\R^n)}.
\]
In particular, the function 
\[
\psi_V ^{\lambda,\omega}(t,x)\coloneqq e^{-i\lambda^2 t}w(x)= e^{-i\lambda^2 t}(w^{(0)}(x)+w^{\mathrm{cor}}(x)),\qquad (t,x)\in\R\times \R^n,
\]
is a weak solution of \eqref{sch_timeind}.
\end{corollary}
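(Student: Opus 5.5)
The argument has three parts: check that the fixed-point equation is well posed in $L^p(\R^n)$, show that its solution solves the Helmholtz-type equation in the weak sense, and estimate $w^{\mathrm{cor}}$ using the Neumann bound of Lemma~\ref{lem:I-P inverse in Lp}.

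\textbf{Well-posedness and the estimate.} Since $|w^{(0)}|=1$ and $1<p'<q$, interpolation gives
\[
\|Vw^{(0)}\|_{L^{p'}(\R^n)}=\|V\|_{L^{p'}(\R^n)}\le \|V\|_{L^1(\R^n)}^{\theta}\|V\|_{L^q(\R^n)}^{1-\theta}\le \|V\|_{L^1(\R^n)\cap L^q(\R^n)}.
\]
By \eqref{eq:KRS}, valid because $p=2q'\in[q_n,p_n)$, we get
\[
\|P_\lambda(Vw^{(0)})\|_{L^p(\R^n)}\lesssim \lambda^{-2n(\frac1p-\frac1{p_n})}\|V\|_{L^{p'}(\R^n)}.
\]
Here $P_\lambda$ is understood as the bounded extension from $\mathcal S(\R^n)$ to $L^{p'}(\R^n)$, which \eqref{eq:KRS} provides by density. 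For $\lambda>2\lambda_V$, Lemma~\ref{lem:I-P inverse in Lp} lets us define $w^{\mathrm{cor}}$ in $L^p(\R^n)$. The inverse has norm at most $2$, which yields the stated bound on $\|w^{\mathrm{cor}}\|_{L^p(\R^n)}$ with an extra harmless factor $2$.

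\textbf{Weak solution.} Applying $\mathrm{Id}-P_\lambda\circ V$ to the definition gives the identity
\[
w^{\mathrm{cor}}=P_\lambda\big(V w^{(0)}+V w^{\mathrm{cor}}\big)=P_\lambda g,\qquad g\in L^{p'}(\R^n),
\]
where $g\in L^{p'}(\R^n)$ by \eqref{eq:holder}. The main point is that $(\Delta+\lambda^2)P_\lambda g=g$ in the sense of distributions for $g\in L^{p'}$, and not only for Schwartz $g$.

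To prove this, take $g_k\in\mathcal S(\R^n)$ with $g_k\to g$ in $L^{p'}(\R^n)$. Then $P_\lambda g_k\to P_\lambda g$ in $L^p(\R^n)$ by \eqref{eq:KRS}. For $\varphi\in C_c^\infty(\R^n)$ we have $\langle P_\lambda g_k,(\Delta+\lambda^2)\varphi\rangle=\langle g_k,\varphi\rangle$, and both sides pass to the limit by $L^p$–$L^{p'}$ duality. This gives
\[
(\Delta+\lambda^2)w^{\mathrm{cor}}=Vw^{(0)}+Vw^{\mathrm{cor}}.
\]
The plane wave satisfies $(\Delta+\lambda^2)w^{(0)}=0$ classically. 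Adding the two equations yields $(\Delta+\lambda^2-V)w=0$ in $\mathcal D'(\R^n)$, where $Vw\in L^1_{\mathrm{loc}}$ since $Vw^{(0)}\in L^q$ and $Vw^{\mathrm{cor}}\in L^{p'}$.

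\textbf{Time-harmonic solution.} Finally, for $\psi=e^{-i\lambda^2t}w(x)$ we compute $i\partial_t\psi=\lambda^2\psi$. Hence
\[
(i\partial_t+\Delta-V)\psi=e^{-i\lambda^2t}(\Delta+\lambda^2-V)w=0.
\]
To verify this weakly on $\R\times\R^n$, test against $\Phi\in C_c^\infty(\R\times\R^n)$ and apply the spatial identity for each fixed $t$ to $\Phi(t,\cdot)$, then integrate in $t$ using Fubini; this is justified since $w\in L^\infty+L^p$ and $Vw\in L^1_{\mathrm{loc}}$.

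I expect the density step to be the main obstacle, because the principal-value definition of $P_\lambda$ only makes sense on $\mathcal S(\R^n)$. It is handled entirely by the uniform bound \eqref{eq:KRS}.
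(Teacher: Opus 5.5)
Your proposal is correct and follows the same route the paper takes. The paper gives no separate proof; it relies on the preceding discussion: invert $\mathrm{Id}-P_\lambda\circ V$ on $L^p(\R^n)$ via Lemma~\ref{lem:I-P inverse in Lp}, bound $P_\lambda(Vw^{(0)})$ by \eqref{eq:KRS} using $\|Vw^{(0)}\|_{L^{p'}}=\|V\|_{L^{p'}}$, and read off the equation from the fixed-point identity $w^{\mathrm{cor}}=P_\lambda(Vw^{(0)}+Vw^{\mathrm{cor}})$. Your density argument extending $(\Delta+\lambda^2)P_\lambda g=g$ from $\mathcal S(\R^n)$ to $L^{p'}(\R^n)$ correctly supplies a detail the paper leaves implicit.
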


\subsection{Endpoint solutions in the critical case \texorpdfstring{$q=n/2,\, n\geq 3$}{q=n/2, n>=3}}\label{sec:stend} Here we assume that the potential $V\in L^1(\R^n)\cap L^{n/2}(\R^n)$. We point out that the decay in $\lambda$ in the $L^{p'}(\R^n)\to L^p(\R^n)$ estimate for $P_\lambda$ from \eqref{eq:soloper} was critical in the proof of Lemma~\ref{lem:I-P inverse in Lp} above, and thus also the subsequent Corollary~\ref{cor:solutions}. A second element used in the proof of Lemma~\ref{lem:I-P inverse in Lp} is that multiplication by $V$ is bounded from $L^p(\R^n)$ to $L^{p'}(\R^n)$, as can be read from \eqref{eq:holder} and used in the Neumann series argument.

Isolating the ingredients above, it will suffice to construct a space $X^*=X^* _\lambda \supseteq P_\lambda(L^1(\R^n)\cap L^{n/2}(\R^n))$ depending on $\lambda$, such that the following hold
\begin{align}\label{eq:modRKS}
\|P_\lambda f\|_{X^*}&\leq c(\lambda) \|f\|_{X},      
\\ \label{eq:multV}
\left|\int_{\R^n} V f g\right|&\leq \tilde c(\lambda) \|f\|_{X^*} \|g\|_{X^*},
\end{align}
with $\lim_{\lambda\to +\infty}c(\lambda)\tilde c(\lambda)=0$.

The first estimate above is a modified Kenig--Ruiz--Sogge estimate with some constant $c(\lambda)$. The second estimate  tells us that the operator of multiplication by $V$ is bounded from $X^*$ to $X$ with constant $\tilde c(\lambda)$.  Assuming these two estimates for a moment we have for any $F\in X^*$ that
\[
\left\|(P_\lambda \circ V)F\right\|_{X^*} \leq c(\lambda) \|VF\|_{X}\lesssim c(\lambda)\tilde c(\lambda)\|F\|_{X^*}
\]
and the operator norm of $P_\lambda\circ V$ becomes smaller than $1$ as $\lambda\to \infty$, allowing for the Neumann series argument.

In the remainder of this subsection we construct the space $X^*$ and prove the estimates \eqref{eq:modRKS} and \eqref{eq:multV}. To that end let us revisit the estimate of Kenig, Ruiz and Sogge, \eqref{eq:KRS}. As discussed in Remark~\ref{rmrk:numer}, the case of critical potential corresponds to $n\geq 3$ and  $q=n/2$ which happens exactly when $p=p_n$, since $\frac{2}{p}+\frac{1}{q}=1$. In this case, the estimate for the solution operator $P_\lambda$ in \eqref{eq:soloper} has no decay in $\lambda$. Since $2n(\frac{1}{q_n}-\frac{1}{p_n})=\frac{2}{n+1}$, the endpoint cases of \eqref{eq:KRS} can be written in the form
\[
      \|P_\lambda f\|_{L^{p_n}(\R^n)}\lesssim \|f\|_{L^{p_n '}(\R^n)},\qquad
     \lambda^{\frac{1}{n+1}} \|P_\lambda f\|_{L^{q_n}(\R^n)}\lesssim \frac{1}{\lambda^\frac{1}{n+1}}\|f\|_{L^{q_n '}(\R^n)}.
\]
We will construct the space $X^*$ so that it suitably combines the two estimates above. More precisely define the Banach space
\[
\X_\lambda \coloneqq L^{q_n'}(\R^n)+L^{p_n'}(\R^n) ,\qquad  \| f \|_{\X_\lambda} \coloneqq \inf \left\{ \lambda^{- \frac{1}{n+1}} \| g\|_{L^{q_n'}(\R^n)} + \| h\|_{L^{p_n'}(\R^n)} : \, f = g+h \right\}.
\]
We recall that the dual of $X_\lambda$ is isometrically isomorphic to the space
\[
X_{\lambda} ^*=  L^{q_n}(\R^n) \cap L^{p_n}(\R^n),\qquad \|g\|_{X_\lambda ^*} =\max\left\{ \lambda^{\frac{1}{n+1}} \| g \|_{L^{q_n}(\R^n)}, \| g \|_{L^{p_n}(\R^n)} \right\} .      
\]
 With $X_\lambda$ as our best candidate for the space $X$ let us first verify some easy facts. Since $1<p_n^\prime <q_n^\prime<n/2$ we have that $ L^1(\R^n)\cap L^{n/2}(\R^n) \subset L^{q_n'}(\R^n)$ and $ L^1(\R^n)\cap L^{n/2}(\R^n) \subset L^{p_n'}(\R^n)$, hence $L^1(\R^n)\cap L^{n/2}(\R^n)\subset X_\lambda$. Thus, if we manage to establish~\eqref{eq:modRKS}, then we obtain that
\[
P_\lambda \big(L^1(\R^n)\cap L^{n/2}(\R^n)\big)\subset P_\lambda(X_\lambda ) \subset X_\lambda ^*
\]
as desired. 

\begin{remark}\label{rem:endpoint_norm_for_large_V} Let $\lambda>0$ and $V\in L^{n/2}(\R^n)$. Define 
\[
E_\lambda \coloneqq \left\{ x\in\R^n:\, |V(x)| > \lambda \|V\|_{L^{n/2}(\R^n)} \right\}.
\] 
Note that $V \ind_{\R^n \setminus E_\lambda} \in L^{\frac{n+1}{2}}(\R^n)$ with the estimate
\[
\left \|V \ind_{\R^n \setminus E_\lambda} \right\|_{L^{\frac{n+1}{2}}(\R^n)} \leq \lambda^{\frac{1}{n+1}} \| V \| _{L^{\frac{n}{2}}(\R^n)} .
\]
Defining
\[
\|V\|_{\lambda}\coloneqq \left \|V\ind_{E_\lambda}\right \|_{L^{n/2}(\R^n)}+\lambda^{-\frac{2}{n+1}}\left\|V \ind_{\R^n \setminus E_\lambda} \right\|_{L^{\frac{n+1}{2}}(\R^n)}
\]
and using the previous estimate, the absolute continuity of the integral of $|V|^{n/2}$ together with the fact the $|E_\lambda|\to 0$ as $\lambda\to \infty$, we conclude that $\|V\|_\lambda \to 0$ as $\lambda\to \infty$.
\end{remark} 

The following lemma is the analogue of \eqref{eq:holder}, namely a H\"older-type inequality in the spirit of the desired \eqref{eq:multV}.

\begin{lemma}\sl \label{lem:multiplication_V_endpoint_X_norm} If $ V\in  L^{n/2}(\R^n) $ then we have the H\"older-type inequality 
\[
\left |\int_{\R^n} V f g \right| \leq \|V\|_\lambda   \| f \|_{\X^*_\lambda} \| g \|_{\X^*_\lambda} 
\]
for all $f,g\in \X^*_\lambda$.  Consequently, we have that 
\[
\| V f \|_{\X_\lambda} \leq \|V\|_\lambda \| f \|_{\X_\lambda^*}.
\]
\end{lemma}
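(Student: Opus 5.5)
Split $V = V\ind_{E_\lambda} + V\ind_{\R^n\setminus E_\lambda}$ and treat each piece with an ordinary H\"older inequality, pairing each piece with the component of the $\X_\lambda^*$ norm it matches. The exponents line up as follows.

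\emph{The set $E_\lambda$.} By the H\"older relation $\frac{2}{p_n}+\frac{2}{n}=1$,
\[
\Big|\int_{E_\lambda} V f g\Big| \le \|V\ind_{E_\lambda}\|_{L^{n/2}(\R^n)}\|f\|_{L^{p_n}(\R^n)}\|g\|_{L^{p_n}(\R^n)}\le \|V\ind_{E_\lambda}\|_{L^{n/2}(\R^n)}\|f\|_{\X^*_\lambda}\|g\|_{\X^*_\lambda}.
\]
This uses only that the $L^{p_n}$ norm is dominated by the $\X^*_\lambda$ norm.

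\emph{The complement.} Here $\frac{2}{q_n}=\frac{n-1}{n+1}=1-\frac{2}{n+1}$, so $\left(\frac{n+1}{2},q_n,q_n\right)$ is a H\"older triple. Therefore
\[
\Big|\int_{\R^n\setminus E_\lambda} V f g\Big| \le \|V\ind_{\R^n\setminus E_\lambda}\|_{L^{\frac{n+1}{2}}(\R^n)}\|f\|_{L^{q_n}(\R^n)}\|g\|_{L^{q_n}(\R^n)}.
\]
Each factor satisfies $\|f\|_{L^{q_n}(\R^n)}\le \lambda^{-\frac{1}{n+1}}\|f\|_{\X^*_\lambda}$. The two factors together produce exactly the weight $\lambda^{-\frac{2}{n+1}}$ that appears in the definition of $\|V\|_\lambda$. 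Remark~\ref{rem:endpoint_norm_for_large_V} guarantees that the $L^{\frac{n+1}{2}}$ norm is finite. Adding the two bounds gives the first inequality.

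\emph{The bound on $Vf$ in $\X_\lambda$.} This follows by duality, using that $\X_\lambda^*$ is the dual of $\X_\lambda$ with the stated isometric norm. It is cleaner, though, to exhibit the decomposition directly. Set
\[
h=V\ind_{E_\lambda}f\in L^{p_n'}(\R^n),\qquad g=V\ind_{\R^n\setminus E_\lambda}f\in L^{q_n'}(\R^n).
\]
H\"older gives $\|h\|_{L^{p_n'}(\R^n)}\le \|V\ind_{E_\lambda}\|_{L^{n/2}(\R^n)}\|f\|_{L^{p_n}(\R^n)}$, since $\frac{1}{p_n'}=\frac{2}{n}+\frac{1}{p_n}$. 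It also gives
\[
\lambda^{-\frac1{n+1}}\|g\|_{L^{q_n'}(\R^n)}\le \lambda^{-\frac1{n+1}}\|V\ind_{\R^n\setminus E_\lambda}\|_{L^{\frac{n+1}{2}}(\R^n)}\|f\|_{L^{q_n}(\R^n)}\le \lambda^{-\frac{2}{n+1}}\|V\ind_{\R^n\setminus E_\lambda}\|_{L^{\frac{n+1}{2}}(\R^n)}\|f\|_{\X^*_\lambda},
\]
using $\frac{1}{q_n'}=\frac{2}{n+1}+\frac{1}{q_n}$. Since $Vf=g+h$, the infimum defining $\|Vf\|_{\X_\lambda}$ is at most the sum of these two bounds, which is $\|V\|_\lambda\|f\|_{\X^*_\lambda}$.

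The argument is routine. The only point needing care is the exponent bookkeeping: checking that $\frac{n+1}{2}$ and $q_n$ form a H\"older triple, and that the $\lambda$ weights combine to exactly $\lambda^{-\frac{2}{n+1}}$.
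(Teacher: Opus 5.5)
Your proof is correct and follows the paper's argument: you split $V$ along $E_\lambda$ and use the H\"older triples $(\frac{n}{2},p_n,p_n)$ on $E_\lambda$ and $(\frac{n+1}{2},q_n,q_n)$ on its complement. For the second inequality, the paper simply says it follows ``consequently'', i.e.\ by duality. Your explicit decomposition $Vf=V\ind_{E_\lambda}f+V\ind_{\R^n\setminus E_\lambda}f$ is a small bonus: it shows directly that $Vf\in \X_\lambda$ with the stated bound, without invoking the identification of $\X_\lambda^*$ as the dual.
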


\begin{proof}Since $1- \frac{2}{p_n} = \frac{2}{n} $ and $ 1 - \frac{2}{q_n} = \frac{2}{n+1} $, using \eqref{eq:holder} and Remark~\ref{rem:endpoint_norm_for_large_V} we get that
\[
\begin{split}
\Big| \int_{\R^n} V fg \Big| &\leq \Big| \int_{\R^n} \ind_{E_\lambda} V fg \Big| + \Big| \int_{\R^n} \ind_{\R^n \setminus E_\lambda} V fg \Big| 
\\
&\leq \| V \ind_{E_\lambda} \|_{L^{n/2}(\R^n)} \| f\|_{L^{p_n}(\R^n)} \| g\|_{L^{p_n}(\R^n)} 
+ \|V \ind_{\R^n \setminus E_\lambda} \|_{L^{\frac{n+1}{2}}(\R^n)} \| f \|_{L^{q_n}(\R^n)} \| g \|_{L^{q_n}(\R^n)} 
\\
&\leq \|V\|_\lambda\|f \|_{\X^\ast_\lambda} \| g \|_{\X^\ast_\lambda}
\end{split}
\]
as desired.
\end{proof}

The spaces $X_\lambda$ and $X_\lambda^*$ are designed so that $P_\lambda\circ V$ regains a smallness property at the critical exponent $q=n/2$. This yields the promised refinement of \eqref{eq:KRS} in the present setting. We now turn to the verification of \eqref{eq:modRKS} for $X_\lambda^*$.

\begin{lemma}\sl \label{lem:modRKS} Let $n\geq 3$ and $\lambda>0$. For $X_\lambda$ as defined above, there holds
\[
\|P_\lambda f\|_{X_\lambda ^*} \lesssim \|f\|_{X_\lambda}.
\]
\end{lemma}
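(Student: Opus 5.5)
Since $\|\cdot\|_{X_\lambda^*}$ is a maximum of two terms and $\|\cdot\|_{X_\lambda}$ is an infimum over decompositions, I will prove four bounds: $P_\lambda$ maps each of $L^{q_n'}(\R^n)$ and $L^{p_n'}(\R^n)$ into each of $L^{q_n}(\R^n)$ and $L^{p_n}(\R^n)$, with the right powers of $\lambda$. Fix $f=g+h$ with $g\in L^{q_n'}(\R^n)$ and $h\in L^{p_n'}(\R^n)$. By linearity $P_\lambda f=P_\lambda g+P_\lambda h$. It then suffices to show
\[
\lambda^{\frac1{n+1}}\|P_\lambda g\|_{L^{q_n}}+\|P_\lambda g\|_{L^{p_n}}\lesssim \lambda^{-\frac1{n+1}}\|g\|_{L^{q_n'}},\qquad
\lambda^{\frac1{n+1}}\|P_\lambda h\|_{L^{q_n}}+\|P_\lambda h\|_{L^{p_n}}\lesssim \|h\|_{L^{p_n'}}.
\]
Taking the infimum over decompositions then gives the lemma.

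Two of the four bounds are exactly the endpoint cases of \eqref{eq:KRS} displayed before the definition of $X_\lambda$. These are the diagonal $L^{q_n'}\to L^{q_n}$ bound with the factor $\lambda^{-2/(n+1)}$, and the $L^{p_n'}\to L^{p_n}$ bound with no decay. The remaining two are off-diagonal estimates, $L^{q_n'}\to L^{p_n}$ and $L^{p_n'}\to L^{q_n}$. These are dual to each other, since $P_\lambda$ is symmetric for the bilinear pairing: its multiplier $(\lambda^2-|\xi|^2)^{-1}$ is even and real. So it suffices to prove one of them, say
\[
\|P_\lambda g\|_{L^{p_n}(\R^n)}\lesssim \lambda^{-\frac1{n+1}}\|g\|_{L^{q_n'}(\R^n)}.
\]

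For this I will use the full Kenig--Ruiz--Sogge theorem rather than only its diagonal case. It gives uniform bounds for $(\Delta+z)^{-1}$ from $L^{r}$ to $L^{s}$ whenever $\frac1r-\frac1s=\frac2n$ and both exponents lie in the admissible region away from the Tomas--Stein line. For general exponents the $\lambda$-dependence follows by rescaling: the bound scales as $\lambda^{n(\frac1r-\frac1s)-2}$. With $r=q_n'$ and $s=p_n$ the exponent is
\[
n\Big(1-\frac1{q_n}-\frac1{p_n}\Big)-2=n\Big(\frac{1}{n+1}+\frac1n\Big)-2+...
\]
More directly, $\frac1{q_n'}-\frac1{p_n}=\frac{n+3}{2(n+1)}-\frac{n-2}{2n}$, and evaluating gives the power $-\frac1{n+1}$, as expected from interpolating the scaling of the two diagonal endpoints.

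I expect the main obstacle to be that the pair $(q_n',p_n)$ is not obviously in the known Kenig--Ruiz--Sogge range. Off-diagonal resolvent bounds (Gutiérrez's extension) require $\frac1r-\frac1s\ge\frac2{n+1}$ together with $\frac1r>\frac{n+1}{2n}$ and $\frac1s<\frac{n-1}{2n}$. Here $\frac1{q_n'}=\frac{n+3}{2(n+1)}>\frac{n+1}{2n}$ and $\frac1{p_n}=\frac{n-2}{2n}<\frac{n-1}{2n}$. The gap is $\frac1{q_n'}-\frac1{p_n}=\frac{n+3}{2(n+1)}-\frac{n-2}{2n}$, which is at least $\frac2{n+1}$, so the pair is admissible.

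If one prefers to avoid citing the off-diagonal theorem, there is a fallback. Split $P_\lambda$ with a smooth cutoff near the sphere $|\xi|=\lambda$. The part away from the sphere is a Bessel-type operator bounded by Hardy--Littlewood--Sobolev with the correct scaling. The part near the sphere factors through the restriction and extension operators on $L^2(\Sph^{n-1})$ via Tomas--Stein, which gives $L^{q_n'}\to L^2(\lambda\Sph^{n-1})\to L^{q_n}$. This is then combined with the $L^{p_n}$ control by a $TT^*$ argument. Once the off-diagonal bound is in hand, duality gives the other one and the lemma follows.
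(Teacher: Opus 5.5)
Your reduction matches the paper's: split $f=g+h$, take the two diagonal bounds from the endpoint cases of \eqref{eq:KRS}, and reduce the two off-diagonal bounds by duality to $\|P_\lambda g\|_{L^{p_n}(\R^n)}\lesssim\lambda^{-\frac1{n+1}}\|g\|_{L^{q_n'}(\R^n)}$. You differ in how you obtain this last estimate.

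You quote Guti\'errez's off-diagonal resolvent theorem, and that is legitimate. The point $(\frac1{q_n'},\frac1{p_n})=\bigl(\frac{n+3}{2(n+1)},\frac{n-2}{2n}\bigr)$ has gap $\frac{2n+1}{n(n+1)}\in[\frac2{n+1},\frac2n]$. You left the upper constraint $\frac1r-\frac1s\le\frac2n$ out of your list, but it holds. The two strict conditions also hold, and the scaling exponent $\frac{2n+1}{n+1}-2=-\frac1{n+1}$ is correct. Since $P_\lambda$ is the average of the limiting resolvents $(\Delta+\lambda^2\pm i0)^{-1}$, the bound passes to the principal-value operator.

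The paper instead derives the off-diagonal bound from the diagonal $L^{q_n'}\to L^{q_n}$ estimate alone, splitting at frequency $\sim\lambda$. On the low part, $P_\lambda(S_{<\lambda}f)$ has Fourier support in $B(0,4\lambda)$, so Bernstein passes from $L^{q_n}$ to $L^{p_n}$ at cost $\lambda^{n(\frac1{q_n}-\frac1{p_n})}=\lambda^{\frac1{n+1}}$. Against the $\lambda^{-\frac2{n+1}}$ of the diagonal bound, this leaves exactly $\lambda^{-\frac1{n+1}}$. On the high part, the symbol is $\simeq 2^{-2k}$ on each Littlewood--Paley piece, and two Bernstein inequalities through $L^2$ sum to $\lambda^{-\frac1{n+1}}$.

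The paper's argument is elementary and self-contained given \eqref{eq:KRS}, and it shows the refinement is a soft consequence of the diagonal estimate. Yours is shorter and gives both off-diagonal bounds at once, since the dual point $(\frac1{p_n'},\frac1{q_n})$ also lies in Guti\'errez's region. It does rest on a deeper theorem whose admissible region must be quoted exactly.

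Your fallback sketch would not stand on its own as written. The near-sphere part of a principal-value multiplier does not literally factor through $L^2(\lambda\Sph^{n-1})$, and the ``$TT^*$ argument'' is unspecified. If you want to avoid the citation, the paper's Bernstein step is the natural replacement, because it needs no restriction theory beyond the diagonal bound.
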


\begin{proof} The conclusion of the lemma is equivalent to the four estimates below
\[
\begin{split}
\lambda^{\frac{1}{n+1}}\|P_\lambda f\|_{L^{q_n}(\R^n)}\lesssim  \lambda^{-\frac{1}{n+1}}\|f\|_{L^{q_n '}(\R^n)},&\qquad \|P_\lambda f\|_{L^{p_n}(\R^n)}\lesssim \|f\|_{L^{p_n '}(\R^n)},
\\
\lambda^{\frac{1}{n+1}}\|P_\lambda f\|_{L^{q_n}(\R^n)}\lesssim \|f\|_{L^{p_n '}(\R^n)},&
\qquad \|P_\lambda f\|_{L^{p_n}(\R^n)}\lesssim  \lambda^{-\frac{1}{n+1}}\|f\|_{L^{q_n '}(\R^n)}.
\end{split}
\]
The estimates in the first line of the display above are exactly the endpoint cases of \eqref{eq:KRS}. The two estimates in the second line are dual to each other so it will suffice to prove
\begin{equation}\label{eq:RKSsdual}
      \lambda^{\frac{1}{n+1}}\|P_\lambda f\|_{L^{p_n}(\R^n)}\lesssim \|f\|_{L^{q_n '}(\R^n)}.
\end{equation}

We turn to this task now. Begin by choosing a smooth bump function $\vp$ on $\R^n$ with $0\leq \vp\leq 1$, $\vp \equiv 1$ in $\{ x\in \R^n : \, |x|<2 \}$ and $\supp \vp \subset \{x\in\R^n: \, |x|< 4\}$. We fix $f\in \mathcal S(\R^n)$ and define the smooth frequency projections
\[
\wh{S_{<\lambda}f}(\xi) \coloneqq \vp ( \xi/\lambda ) \wh{f}(\xi),\qquad \wh{S_{>\lambda}f}(\xi) \coloneqq [ 1 - \vp ( \xi/\lambda ) ] \wh{f}(\xi),\qquad \xi\in\R^n.
\]
Let us write also $S_k f \coloneqq S_{<2^{k+1}}f-S_{<2^k}f$ for $k\in\Z$. We estimate
\begin{equation}\label{eq:resolvent_projections_split}
\|P_\lambda f \|_{L^{p_n}(\R^n)} \leq \| P_\lambda(S_{<\lambda} f) \|_{L^{p_n}(\R^n)} + \| P_\lambda(S_{>\lambda}f) \|_{L^{p_n}(\R^n)}.
\end{equation}
Since $P_\lambda(S_{<\lambda}f)$ is supported in frequency in $B(0,4\lambda)$ and
$p_n>q_n$, Bernstein's inequality gives
\begin{align*}
 \| P_\lambda(S_{<\lambda} f) \|_{L^{p_n}(\R^n)} &\lec (\lambda^n)^{\frac{1}{q_n} - \frac{1}{p_n}}  \| P_\lambda(S_{<\lambda} f) \|_{L^{q_n}(\R^n)} \lec \lambda^{\frac{1}{n+1}} \| P_\lambda f \|_{L^{q_n}(\R^n)}\lesssim \lambda^{- \frac{1}{n+1}} \| f \|_{L^{q_n'}(\R^n)}.
 \end{align*}

For the second summand in \eqref{eq:resolvent_projections_split} note that $|\xi|>2\lambda$ on the frequency support of $\wh{S_{>\lambda} f}$, which means that
$ \| \widehat{P_\lambda(S_k f)} \|_{L^2(\R^n)} = 2^{2k} \| \widehat{S_k f} \|_{L^2(\R^n)}$
whenever $2^k > 2 \lambda$. Then we use again that $q_n ' \leq 2 \leq p_n$ together with Bernstein's inequality, symbol estimates and again Bernstein's inequality, to get  
\[
\begin{split}
 \| P_\lambda (S_{>\lambda} f) \|_{L^{p_n}(\R^n)} &\lec \sum_{2^k > 2 \lambda} 2^{kn(\frac{1}{2} - \frac{1}{p_n})}      \| P_\lambda(S_k f) \|_{L^2(\R^n)} 
\simeq \sum_{ 2^k\gtrsim \lambda} 2^{-k} \| S_k f \|_{L^2(\R^n)} 
\\
&\lec \sum_{2^k \gtrsim \lambda} 2^{-k} 2^{kn(\frac{1}{q_n'} - \frac{1}{2})}  \| S_k f \|_{L^{q_n'}(\R^n)}
\lesssim \lambda^{-\frac{1}{n+1}} \| f \|_{L^{q_n'}(\R^n)}.
\end{split}
\]
This yields the desired estimate for the second summand in \eqref{eq:resolvent_projections_split}. Summing the estimates for the low and high frequencies of $P_\lambda f$ proves \eqref{eq:RKSsdual} and completes the proof of the lemma.
\end{proof}

Combining the previous results and the discussion in this subsection it is now routine to fill in the details of the proof of the following corollary, where the perturbations $w^{\mathrm{cor}}$ are constructed for critical potentials $V\in L^1(\R^n)\cap L^{n/2}(\R^n)$.

\begin{corollary}\label{cor:solutionsVcrit}\sl Let $n\geq 3$ and consider a time-independent potential $V\in L^1(\R^n)\cap L^{n/2}(\R^n)$.  Let $X_\lambda,X_\lambda ^*$ and $\|V\|_\lambda$ as defined above. There exists a constant $\lambda_V$ depending only on $V$ and the dimension such that for $\lambda > \lambda_V$, the functions
\[
w(x)\coloneqq e^{-i\lambda\omega\cdot x} +w^{\mathrm{cor}} (x)\eqqcolon w^{(0)}(x)+w^{\mathrm{cor}}(x),\qquad w^{\mathrm{cor}}(x)\coloneqq (\mathrm{Id}-P_\lambda\circ V)^{-1}[P_\lambda(Vw^{(0)})],
\]
with $\omega\in\mathbb S^{n-1} $ and $ x\in\R^n$,
are weak solutions of the time-independent Schr\"odinger equation 
\[
(\Delta+\lambda^2-V)w=0\quad\mathrm{in}\quad\R^n.
\]
Additionally, the following estimates are satisfied
\[
\left\|w^{\mathrm{cor}} \right\|_{X_\lambda ^*}\lesssim \|V\|_{X_\lambda}\leq \lambda^{-\frac{1}{n+1}} \|V\|_{L^{q_n '}(\R^n)}\leq  \lambda^{-\frac{1}{n+1}} \|V\|_{L^1(\R^n)\cap L^{n/2}(\R^n)}
\]
In particular, the function 
\[
\psi_V ^{\lambda,\omega}(t,x)\coloneqq e^{-i\lambda^2 t}w(x)= e^{-i\lambda^2 t}(w^{\mathrm{cor}}(x)+w^{(0)}(x)),\qquad (t,x)\in\R\times \R^n,
\]
is a weak solution of \eqref{sch_timeind}.
\end{corollary}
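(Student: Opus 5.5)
We combine Lemma~\ref{lem:modRKS} with Lemma~\ref{lem:multiplication_V_endpoint_X_norm} and run a Neumann series in $\mathcal L(X_\lambda^*)$. For $F\in X_\lambda^*$ we have $VF\in X_\lambda$ with $\|VF\|_{X_\lambda}\le \|V\|_\lambda\|F\|_{X_\lambda^*}$. Lemma~\ref{lem:modRKS} then gives
\[
\|(P_\lambda\circ V)F\|_{X_\lambda^*}\le C_n\|V\|_\lambda\|F\|_{X_\lambda^*},
\]
where $C_n$ depends only on $n$. Here $P_\lambda$ is first defined on Schwartz functions. Lemma~\ref{lem:modRKS} shows it extends by density to a bounded map $X_\lambda\to X_\lambda^*$, since $\mathcal S$ is dense in both $L^{q_n'}$ and $L^{p_n'}$, hence in their sum. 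By Remark~\ref{rem:endpoint_norm_for_large_V}, $\|V\|_\lambda\to0$ as $\lambda\to\infty$, so we may choose $\lambda_V$ (depending on $V$ and $n$ only) with $C_n\|V\|_\lambda\le 1/2$ for $\lambda>\lambda_V$. Then $\mathrm{Id}-P_\lambda\circ V$ is invertible on $X_\lambda^*$ and the inverse has norm at most $2$.

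Next comes the source term. Since $|w^{(0)}|=1$, we have $Vw^{(0)}\in L^1\cap L^{n/2}\subset L^{q_n'}$. Using the decomposition $g=Vw^{(0)}$, $h=0$ in the definition of the $X_\lambda$-norm gives
\[
\|Vw^{(0)}\|_{X_\lambda}\le \lambda^{-\frac1{n+1}}\|V\|_{L^{q_n'}}.
\]
Interpolation between $L^1$ and $L^{n/2}$ (because $1<q_n'<n/2$) bounds $\|V\|_{L^{q_n'}}$ by $\|V\|_{L^1\cap L^{n/2}}$. Lemma~\ref{lem:modRKS} then gives $P_\lambda(Vw^{(0)})\in X_\lambda^*$ with norm $\lesssim\|Vw^{(0)}\|_{X_\lambda}=\|V\|_{X_\lambda}$. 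Applying the inverse, with norm at most $2$, yields $w^{\mathrm{cor}}\in X_\lambda^*$ together with the stated chain of estimates.

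The main point requiring care is that $w$ is a weak solution. From $(\mathrm{Id}-P_\lambda V)w^{\mathrm{cor}}=P_\lambda(Vw^{(0)})$ we get $w^{\mathrm{cor}}=P_\lambda(Vw)$. Here $Vw=Vw^{(0)}+Vw^{\mathrm{cor}}\in X_\lambda$, since $Vw^{\mathrm{cor}}\in X_\lambda$ by Lemma~\ref{lem:multiplication_V_endpoint_X_norm}. It remains to check that $(\Delta+\lambda^2)P_\lambda g=g$ in distributions for every $g\in X_\lambda$, not only for Schwartz $g$. To do this, pair with $\phi\in\mathcal S$ and write $\langle P_\lambda g,(\Delta+\lambda^2)\phi\rangle$. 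Approximate $g$ in $X_\lambda$ by Schwartz $g_k$, for which the identity holds by \S\ref{sec:KRS}. The left side converges because $P_\lambda g_k\to P_\lambda g$ in $X_\lambda^*\subset L^{p_n}$ and $(\Delta+\lambda^2)\phi\in L^{p_n'}$. The right side $\langle g_k,\phi\rangle$ converges because $X_\lambda\hookrightarrow\mathcal S'$ continuously. Hence
\[
(\Delta+\lambda^2)w^{\mathrm{cor}}=Vw^{(0)}+Vw^{\mathrm{cor}}.
\]
Also $(\Delta+\lambda^2)w^{(0)}=0$, so $(\Delta+\lambda^2-V)w=0$ weakly, with $Vw\in L^1_{\mathrm{loc}}$ as required.

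Finally, $\psi=e^{-i\lambda^2t}w$ satisfies $i\partial_t\psi=\lambda^2\psi$. Therefore $(i\partial_t+\Delta-V)\psi=e^{-i\lambda^2t}(\Delta+\lambda^2-V)w=0$ in $\mathcal D'(\R\times\R^n)$, which follows by testing against tensor products, or against general test functions with Fubini.
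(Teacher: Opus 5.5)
Your proposal is correct and follows the paper's proof. You run a Neumann series in $\mathcal L(X_\lambda^*)$ by combining Lemma~\ref{lem:modRKS} with Lemma~\ref{lem:multiplication_V_endpoint_X_norm}, use $\|V\|_\lambda\to0$ from Remark~\ref{rem:endpoint_norm_for_large_V}, and bound the source term through $\|f\|_{X_\lambda}\le\lambda^{-\frac1{n+1}}\|f\|_{L^{q_n'}}$. In addition, you spell out the density extension of $P_\lambda$ to $X_\lambda$ and the distributional identity $(\Delta+\lambda^2)P_\lambda g=g$ for $g\in X_\lambda$, both of which the paper leaves as routine.
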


\begin{proof} As discussed in the comments preceding this lemma, we will invert the operator $(\mathrm{Id}-P_\lambda \circ V)$ on $\mathcal L(X_\lambda ^*)$. We combine the estimates of Lemma~\ref{lem:multiplication_V_endpoint_X_norm} with the one of Lemma~\ref{lem:modRKS} to estimate for each 
$F\in X_\lambda ^*$ 
\[
\|(P_\lambda \circ V )F\|_{X_\lambda ^*} \lesssim \|VF\|_{X_\lambda}\leq \|V\|_\lambda \|F\|_{X_\lambda ^*}.
\]
Now Remark~\ref{rem:endpoint_norm_for_large_V} shows that $\|P_\lambda \circ V \|_{\mathcal L(X_\lambda ^*)}<\frac{1}{2}$ if $\lambda$ is sufficiently large, say $\lambda>\lambda_V$ for some constant $\lambda_V$ depending on $V$. We can thus invert $\mathrm{Id}-P_\lambda\circ V$ in $\mathcal L(X_\lambda ^*)$ for all $\lambda>\lambda _V$ and $\|(\mathrm{Id}-P_\lambda\circ V)^{-1}\|_{\mathcal L(X_\lambda ^*)}<2$. Consequently, the following chain of estimates holds
\[
\|w^{\mathrm{cor}} \|_{X_\lambda ^*}\lesssim \|P_\lambda(Vw^{(0)})\|_{X_\lambda ^*}\lesssim \|V\|_{X_\lambda}.
\]
In order to complete the proof we just notice that $L^{q_n '}$ embeds continuously into $X_\lambda$ and the following norm estimate holds
\[
\| f\|_{X_\lambda} \leq \lambda^{-\frac{1}{n+1}}\|f\|_{L^{q_n '}}
\]
which yields the desired estimate when applied to $V$. Note that $1<q_n '=\frac{2(n+1)}{n+3}<n/2$ for $n\geq 3$ so the $L^{q_n '}$ norm of $V$ is controlled by $\|V\|_{L^1(\R^n)\cap L^{n/2}(\R^n)}$ and the proof is complete.
\end{proof}

\section{An orthogonality formula for stationary states}\label{sec:orthorelation} The goal of this section is to prove an orthogonality formula in the spirit of \eqref{eq:orthogonality_intro}, but one where the physical solutions $u_1,v_2$ are replaced by stationary state solutions such as those constructed in Section~\ref{sec:timeindependent_solutions} and in particular having the form \eqref{eq:steadysol}. We first need to state and prove a series of integration by parts identities which will be used in extending the orthogonality relation \eqref{eq:orthogonality_intro}.

\subsection{Integration by parts identities} We record several integration by parts formulas that will be used to derive the orthogonality relation for stationary states in Proposition~\ref{pr:int_by_parts_time-harmonic_solutions}.

We first state a special case of \cite{caro2025initialtofinalstateinverseproblemunbounded}*{Proposition 5.2}, adapted to our setting. The proof is omitted, since it follows the same argument; see \cite{caro2025initialtofinalstateinverseproblemunbounded}*{Appendix A} for details of the proof.

\begin{proposition}\label{prp:integration_by_parts_unbounded_no-endpoint}\sl Let $n\geq 2$ and $(r,p)$ be a Strichartz pair. Assume
\[
u,v \in C\left ([0,T];L^2(\R^n)\right) \cap L^r\left((0,T);L^p(\R^n)\right)
\]
be such that
\[
(i\partial_t+\Delta)u, (i\partial_t+\Delta)v \in L^{r'}\big((0,T);L^{p'}(\R^n)\big).
\]
Then
\[
\int_{\Sigma} \left[(i\partial_t+\Delta)u \ovl v - u \ovl{(i\partial_t+\Delta)v}\right] = i \int_{\R^n} \left[u(T,\cd)\ovl{v(T,\cd)}-u(0,\cd)\ovl{v(0,\cd)}\right].
\]
\end{proposition}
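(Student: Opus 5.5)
The plan is to prove the identity first for smooth, rapidly decaying functions and then pass to the limit by density, controlling every term through the Strichartz estimates.

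\textbf{Smooth case.} For $u,v\in C^1([0,T];\mathcal S(\R^n))$ the identity is a direct computation. Green's formula on $\R^n$ gives $\int_{\R^n}\Delta u\,\ovl v=\int_{\R^n}u\,\ovl{\Delta v}$, with no boundary terms because of the rapid decay. Hence the Laplacian contributions cancel, and the left-hand side reduces to
\[
\int_\Sigma \bigl(i\partial_t u\,\ovl v+u\,\ovl{i\partial_t v}\bigr)=i\int_\Sigma\partial_t(u\ovl v).
\]
Applying the fundamental theorem of calculus in $t$ then gives the right-hand side.

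\textbf{Approximation.} For general $u$, set $F\coloneqq(i\partial_t+\Delta)u\in L^{r'}((0,T);L^{p'})$. By Duhamel's formula we write $u(t)=e^{it\Delta}u(0)-i\int_0^t e^{i(t-s)\Delta}F(s)\,\dd s$; this representation follows from uniqueness of distributional solutions in $C([0,T];L^2)$. We choose $f_k\in\mathcal S$ with $f_k\to u(0)$ in $L^2$, and $F_k\in C_c^\infty((0,T)\times\R^n)$ with $F_k\to F$ in $L^{r'}L^{p'}$. This density holds since $r',p'<\infty$; in the case $n=2$ one has $p<\infty$, hence $p'>1$, which is what the exclusion $(2,2,\infty)$ ensures. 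Let $u_k$ be the corresponding Duhamel solutions, and define $v_k$ in the same way.

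By the inhomogeneous Strichartz estimates, including Keel--Tao at the endpoint $(2,p_n)$,
\[
u_k\to u\quad\text{in } C([0,T];L^2)\cap L^r((0,T);L^p).
\]
The $u_k$ are smooth in $x$ with enough decay for the smooth computation; if needed, one also mollifies in time. Apply the identity to $(u_k,v_k)$. The right-hand side converges by $C([0,T];L^2)$ convergence. Each term of the left-hand side, such as $\int_\Sigma F_k\ovl{v_k}$, converges by the H\"older duality $L^{r'}L^{p'}\times L^rL^p$.

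\textbf{Main obstacle.} The delicate point is justifying the Duhamel representation and the Strichartz bounds at the Keel--Tao endpoint. A second point is ensuring that the approximants are regular enough for the smooth computation, since Schwartz class is not preserved under the integral in time. Both issues I would handle by also truncating $F_k$ in frequency, which keeps $u_k$ in $C^1([0,T];H^\infty)$; there the integration by parts is legitimate via Plancherel.
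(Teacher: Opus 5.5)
Your argument is correct. The paper does not prove this proposition: it defers to Appendix A of the Caro--Ruiz paper on unbounded potentials, so your argument has to stand on its own, and it does. The density scheme is sound: Duhamel's formula with uniqueness in $C([0,T];L^2(\R^n))$, Schwartz approximants of $u(0,\cdot)$, $C_c^\infty((0,T)\times\R^n)$ approximants of $(i\partial_t+\Delta)u$, and convergence in $C([0,T];L^2(\R^n))\cap L^r((0,T);L^p(\R^n))$ via the inhomogeneous Strichartz estimates (Keel--Tao when $p=p_n$).

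Three small corrections:
\begin{enumerate}
\item In the smooth case the intermediate integrand should be $i\partial_t u\,\ovl v-u\,\ovl{i\partial_t v}$. With your plus sign it equals $i(\partial_t u\,\ovl v-u\,\partial_t\ovl v)$, although your final expression $i\int_\Sigma\partial_t(u\ovl v)$ is right.
\item Density of $C_c^\infty$ in $L^{r'}((0,T);L^{p'}(\R^n))$ holds simply because $r',p'\in[1,2]$. The exclusion of $(2,2,\infty)$ has nothing to do with it; it is there because the Strichartz estimate fails at that pair.
\item Your second ``obstacle'' is not one. For $f_k\in\mathcal S(\R^n)$ and $F_k\in C_c^\infty((0,T)\times\R^n)$, the Duhamel solution already lies in $C^\infty([0,T];\mathcal S(\R^n))$, because $e^{i\tau\Delta}$ acts continuously on $\mathcal S(\R^n)$, locally uniformly in $\tau$. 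So neither frequency truncation nor time mollification is needed.
\end{enumerate}

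A lighter route is to mollify in $x$ only, which is the device the paper itself uses in the proof of Proposition~\ref{cor:integration_by_parts_unbounded_no-endopint2}. Set $u_\varepsilon=\phi_\varepsilon*u$. Then $(i\partial_t+\Delta)u_\varepsilon=\phi_\varepsilon*(i\partial_t+\Delta)u\in L^{r'}((0,T);L^2(\R^n))$ by Young's inequality, since $p'\le 2$. Hence $u_\varepsilon\in C([0,T];H^2(\R^n))\cap W^{1,1}((0,T);L^2(\R^n))$. The identity for $u_\varepsilon,v_\varepsilon$ follows from the product rule for $t\mapsto\langle u_\varepsilon(t),v_\varepsilon(t)\rangle_{L^2}$ and the symmetry of $\Delta$ on $H^2(\R^n)$. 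One then lets $\varepsilon\to0$ by dominated convergence in $t$, with majorant $\|(i\partial_t+\Delta)u(t)\|_{L^{p'}}\|v(t)\|_{L^p}\in L^1(0,T)$ (and symmetrically for the other term).

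This route needs no Duhamel formula, no uniqueness theorem and no Strichartz estimate, in particular not the Keel--Tao endpoint. It also shows that the hypothesis $u,v\in L^r((0,T);L^p(\R^n))$ only serves to make the pairings absolutely convergent: admissibility of $(r,p)$ plays no role in the identity, and only $2\le p<\infty$ is used. Your route costs more machinery; its advantage is that the approximants are exact smooth solutions converging in the full Strichartz norm.
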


We now prove an extension of Proposition~\ref{prp:integration_by_parts_unbounded_no-endpoint} in the special case $u(0,\cd)=u(T,\cd)=0$, where the integration by parts identity remains valid even when the second function is only assumed to lie in $C([0,T];L^{p}(\R^{n}))$.

\begin{proposition}\label{cor:integration_by_parts_unbounded_no-endopint2}\sl
Let $n\geq 2$ and $(r,p)$ be a Strichartz pair. Let
\[
v \in C\left([0,T];L^p(\R^n)\right)
\]
and
\[
u \in C\left([0,T];L^2(\R^n)\right) \cap L^r\left((0,T);L^p(\R^n)\right)\quad\text{with}\quad u(0,\cd)=u(T,\cd)=0,
\]
be such that
\[
\left(i\partial_t+\Delta\right)u, \left(i\partial_t+\Delta\right)v \in L^{r'}\big((0,T);L^{p'}(\R^n)\big).
\]
Then
\[
\int_\Sigma \left(i\partial_t+\Delta\right)u \ovl v = \int_\Sigma u \ovl{\left(i\partial_t+\Delta\right)v}.
\]
\end{proposition}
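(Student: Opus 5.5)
The plan is to regularize $v$ so that it falls within the scope of Proposition~\ref{prp:integration_by_parts_unbounded_no-endpoint}, apply that identity, and then remove the regularization. Because $u(0,\cd)=u(T,\cd)=0$, the boundary terms will vanish whatever the regularized $v$ is. Write $F\coloneqq(i\partial_t+\Delta)v\in L^{r'}((0,T);L^{p'}(\R^n))$ and $G\coloneqq(i\partial_t+\Delta)u\in L^{r'}((0,T);L^{p'}(\R^n))$.

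\textbf{Step 1: regularization.} Fix a mollifier $\eta_\delta$ in $x$ and set $v_\delta\coloneqq v*\eta_\delta$. Then $v_\delta$ and all its spatial derivatives lie in $C([0,T];L^p(\R^n))$, and $(i\partial_t+\Delta)v_\delta=F*\eta_\delta\eqqcolon F_\delta$. Fix a cutoff $\chi\in C_c^\infty(\R^n)$ with $\chi\equiv1$ on $B(0,1)$, put $\chi_R\coloneqq\chi(\cd/R)$, and set $v_{\delta,R}\coloneqq\chi_R v_\delta$. Since $p\ge2$ and $\chi_R$ has compact support, $v_{\delta,R}\in C([0,T];L^2(\R^n))\cap L^r((0,T);L^p(\R^n))$. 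Moreover
\[
(i\partial_t+\Delta)v_{\delta,R}=\chi_R F_\delta+2\nabla\chi_R\cdot\nabla v_\delta+(\Delta\chi_R)v_\delta .
\]
The last two terms are compactly supported and lie in $L^\infty((0,T);L^p(\R^n))$, hence in $L^{r'}((0,T);L^{p'}(\R^n))$ because $p'\le p$. Proposition~\ref{prp:integration_by_parts_unbounded_no-endpoint} therefore applies to the pair $u,v_{\delta,R}$ and gives
\[
\int_\Sigma G\,\chi_R\ovl{v_\delta}=\int_\Sigma u\,\big[\chi_R\ovl{F_\delta}+2\nabla\chi_R\cdot\nabla\ovl{v_\delta}+(\Delta\chi_R)\ovl{v_\delta}\big].
\]

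\textbf{Step 2: $R\to\infty$ with $\delta$ fixed (main obstacle).} The terms $\int G\chi_R\ovl{v_\delta}$ and $\int u\chi_R\ovl{F_\delta}$ converge to their uncut versions by dominated convergence. This uses Hölder in the pairings $L^{r'}L^{p'}\times L^rL^p$, together with $v_\delta\in L^\infty L^p\subset L^rL^p$.

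The commutator terms are the delicate point, since no smallness is available from $v_\delta\in L^p$ alone. Here I would use $u\in C([0,T];L^2(\R^n))$ instead of the Strichartz norm. Let $A_R=\{R\le|x|\le CR\}$ be the annulus containing $\supp\nabla\chi_R$. Hölder in $L^2\times L^2$ on $A_R$, followed by Hölder from $L^p$ to $L^2$ on a set of measure $\simeq R^n$, gives
\[
\Big|\int_\Sigma u\,\nabla\chi_R\cdot\nabla\ovl{v_\delta}\Big|\lesssim R^{-1}R^{n(\frac12-\frac1p)}\int_0^T\|u(t)\ind_{A_R}\|_{L^2}\|\nabla v_\delta(t)\|_{L^p}\,\dd t .
\]
Since $n(\frac12-\frac1p)=\frac2r\le1$, the power of $R$ is bounded, including at the endpoint $r=2$. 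The integral itself tends to $0$ by dominated convergence, because $\|u(t)\ind_{A_R}\|_{L^2}\to0$ for each $t$ and is bounded by $\sup_t\|u(t)\|_{L^2}$. The $\Delta\chi_R$ term is handled the same way and carries an extra factor $R^{-1}$. This yields
\[
\int_\Sigma G\,\ovl{v_\delta}=\int_\Sigma u\,\ovl{F_\delta}.
\]

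\textbf{Step 3: $\delta\to0$.} Continuity of $v$ in $C([0,T];L^p(\R^n))$ gives $v_\delta\to v$ in $L^r((0,T);L^p(\R^n))$, and $F_\delta\to F$ in $L^{r'}((0,T);L^{p'}(\R^n))$ for $r'<\infty$. Both sides of the identity from Step 2 are Hölder pairings between $L^{r'}L^{p'}$ and $L^rL^p$, so we may pass to the limit and obtain $\int_\Sigma G\ovl v=\int_\Sigma u\ovl F$, which is the claim. The only genuinely nontrivial point in the argument is the commutator estimate in Step 2. Its resolution relies on the compatibility of the Strichartz exponents, namely $\frac2r\le1$, to keep the power of $R$ bounded.
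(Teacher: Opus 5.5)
Your proof is correct. It uses the same skeleton as the paper's proof: regularize $v$ by a cutoff times a spatial mollification, apply Proposition~\ref{prp:integration_by_parts_unbounded_no-endpoint} (the boundary terms vanish because $u(0,\cdot)=u(T,\cdot)=0$), and remove the regularization. In both proofs the commutator term is controlled by H\"older on the annulus, with $u$ in $L^2$ and the exponent identity $n(\frac12-\frac1p)=\frac2r\le 1$.

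The difference is in how the limits are taken.

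\begin{itemize}
\item The paper sends $\varepsilon\to0$ and $R=R(\varepsilon)\to\infty$ simultaneously. The factor $\varepsilon^{-1}$ coming from $\nabla\phi_\varepsilon$ must then be beaten in two separate cases. For $p<p_n$ it uses the decay $R^{-n(\frac1p-\frac1{p_n})}$ with an explicit choice of $R(\varepsilon)$. For $p=p_n$ it chooses $R(\varepsilon)$ so large that the $L^1((0,T);L^{p_n})$-norm of $v$ on the annulus is below $\varepsilon^2$.
\item You instead let $R\to\infty$ first, at fixed $\delta$. There $\delta^{-1}$ is just a constant, and the smallness comes from the $L^2$-tail of $u(t)$ by dominated convergence in $t$. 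Only then do you let $\delta\to0$, which involves nothing more than H\"older pairings between $L^{r'}((0,T);L^{p'})$ and $L^r((0,T);L^p)$.
\end{itemize}

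Your iterated limit removes the case distinction between $p<p_n$ and $p=p_n$ and the bookkeeping for $R(\varepsilon)$, at no extra cost. It is a slightly cleaner route to the same identity.
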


\begin{proof} If we additionally assume that $v\in C([0,T];L^2(\R^n))$, the conclusion follows directly from Proposition~\ref{prp:integration_by_parts_unbounded_no-endpoint}, since the boundary terms vanish due to the conditions $u(0,\cd)=u(T,\cd)=0$. The main part of the proof is therefore devoted to removing the assumption $v\in C([0,T];L^2(\R^n))$ and reducing the case $v\in C([0,T];L^p(\R^n))$ to the $L^2$-case via approximation.

Let $\chi$ be a smooth bump function on $\R^n$ such that $0\leq \chi \leq 1$, $\chi\equiv 1$ on $\{|x|\leq 1\}$, and $\supp \chi \subset \{|x|\leq 2\}$. Let also $\phi\in \mathcal S(\R^n)$ be a nonnegative bump function with $\supp \phi \subset B(0,1)$, satisfying $\int_{\R^n}\phi=1$. For $\varepsilon>0$, let $R=R(\varepsilon)>0$ be a function such that
\[
\lim_{\varepsilon\to0} R(\varepsilon)=+\infty,
\]
which will be chosen in the course of the proof.

We define the rescaled bump functions $\chi^R(x)\coloneqq \chi(x/R)$ and $\phi_\varepsilon(x)\coloneqq \varepsilon^{-n}\phi(x/\varepsilon)$ and for general $v \in C([0,T];L^p(\R^n))$, we set
\[
v_{R,\varepsilon} \coloneqq \chi^R (\phi_\varepsilon * v).
\]
Here the convolution $\phi_\varepsilon * v$ is taken only in the space variables.  Since $v\in C([0,T];L^p(\R^n))$ with $p\geq 2$ and $\chi^R$ has compact support, it follows that
\[
v_{R,\varepsilon}\in C\left([0,T];L^2(\R^n)\right) \cap L^r\left((0,T);L^p(\R^n)\right).
\]
Now an application of the Leibniz rule in the sense of distributions yields
\begin{equation}\label{eq:Leibniz_distributional}
\left(i \d_t + \Delta \right)v_{R, \ve} = \chi^R \left(i\d_t + \Delta\right)(v*\phi_\ve) +  ( v*\phi_\ve ) \Delta(\chi^R)  + 2 ( \nabla \chi^R ) \cdot \nabla (v*\phi_\ve),
\end{equation}
in $\mathcal{D}' (\Sigma)$. We record for later use the easy identities
\begin{equation}\label{eq:scalederiv}
\begin{split}
& \Delta(\chi^R) = R^{-2} (\Delta \chi)^R \ind_{\{R\leq |x| \leq 2R\}}, \qquad \nabla (\chi^R) = R^{-1} (\nabla \chi)^R \ind_{\{R\leq |x| \leq 2R\}}, 
\\
&  \nabla (\phi_\ve) = \ve^{-1} (\nabla \phi)_{\ve}. 
\end{split}
\end{equation}

We estimate each term in \eqref{eq:Leibniz_distributional}. First, we have
\[
\chi^R (i\d_t + \Delta)(v*\phi_\ve) = \chi^R \phi_\ve * (i\d_t + \Delta)v \in L^{r'}\big((0, T); L^{p'}(\R^n)\big).
\]
Moreover, the function
\[
( v*\phi_\ve ) \Delta(\chi^R) = R^{-2} (\Delta \chi)^R (v*\phi_\ve)
\]
has support contained in $\{x\in\R^n:\, R\leq |x|\leq 2R\}$ by the definition of $\chi$. Since $p\geq 2$ we have that $p'\leq 2 \leq p$, and hence $(v*\phi_\ve) \Delta (\chi^R)\in C([0, T]; L^{p'}(\R^n))$.  Similarly, we also infer that 
\[
( \nabla \chi^R ) \nabla (v*\phi_\ve) = R^{-1} \ve^{-1} (\nabla \chi)^R (v* (\nabla \phi)_\ve) \in C\big ([0, T]; L^{p'}(\R^n)\big). 
\]
Therefore, $(i \d_t + \Delta )v_{R, \ve} \in L^{r'}((0, T); L^{p'}(\R^n))$ and we can use the integration by parts formula of Proposition~\ref{prp:integration_by_parts_unbounded_no-endpoint} with the right hand side equal to zero to get 
\[
\int_\Sigma [(i\d_t + \Delta)u] \ovl{v_{R,\ve}} = \int_\Sigma \ovl{(i\d_t + \Delta) v_{R, \ve}}  u.
\]
Notice that, in order to complete the proof it will be enough to show that 
\begin{equation}\label{eq: Cor_int_by_parts_approx1}
\lim_{\ve \to 0} \int_\Sigma \left[(i\d_t + \Delta) u\right] \ovl{(v_{R, \ve} - v)} = 0, 
\end{equation}
and
\begin{equation}\label{eq: Cor_int_by_parts_approx2}
\lim_{\ve \to 0} \int_\Sigma u  \ovl{\left(i\d_t + \Delta\right)(v_{R, \ve} - v)} = 0.
\end{equation}

We begin with the proof of~\eqref{eq: Cor_int_by_parts_approx1}. We have that 
\begin{align*}
\Big| \int_\Sigma  [(i\d_t + \Delta) u] \ovl{(v_{R, \ve} - v)} \Big| &\leq 
\| (i\d_t + \Delta)u \|_{L^{r'}((0, T); L^{p'}(\R^n))} \| v_{R, \ve} - v \|_{L^r((0, T); L^p(\R^n))} \\
&\leq \| (i\d_t + \Delta)u \|_{L^{r'}((0, T); L^{p'}(\R^n))} \| (\chi^R - 1) v\|_{L^r((0, T); L^p(\R^n))} \\
&+ \| (i\d_t + \Delta)u \|_{L^{r'}((0, T); L^{p'}(\R^n))} \| \phi_\ve * v - v \|_{L^r((0, T); L^p(\R^n))}
\end{align*}
which implies~\eqref{eq: Cor_int_by_parts_approx1} as $\ve \to 0$ by dominated convergence. 

We now turn to the proof of the more challenging limit~\eqref{eq: Cor_int_by_parts_approx2}. We have that 
\begin{equation}\label{eq: Cor_int_by_parts_approx2_split} 
\begin{split}
\left| \int_\Sigma \big[ \ovl{ (i\d_t + \Delta)(v_{R,\ve} -v) } \big] u \right| &\leq
\left| \int_\Sigma \big[ \ovl{ \chi^R [(i\d_t+\Delta)v]*\phi_\ve - (i\d_t+\Delta)v }\big] u \right| 
\\
&\qquad +\left| \int_\Sigma \overline{\Delta(\chi^R) (v*\phi_\ve)} u \right| 
+ 2 \left| \int_\Sigma \overline{\nabla (\chi^R) \cdot \nabla (v*\phi_\ve)} u \right|
\\
&\eqqcolon \mathrm{I}(\ve,R)+\mathrm{II}(\ve,R)+\mathrm{III}(\ve,R).
\end{split}  
\end{equation}

We begin by estimating $\mathrm I(\ve,R)$. We have
\[
\begin{split}
\mathrm{I}(\ve,R) &\leq  \int_\Sigma \left| \left[\ovl{\phi_\ve*\left(i\d_t+\Delta\right)v - \left(i\d_t+\Delta\right)v } \right] \chi^R \right| |u|  + \int_\Sigma \left|\chi^R - 1\right| \left|\left(i\d_t +\Delta\right)v \right| |u| 
\\
&\leq \left\| \phi_\ve*\left(i\d_t+\Delta\right)v - \left(i\d_t+\Delta\right)v \right\|_{L^{r'}((0, T); L^{p'}(\R^n))} \| u \|_{L^r((0, T); L^p(\R^n))} 
\\
& \qquad + \int_\Sigma \left|\chi^R - 1\right| \left|\left(i\d_t +\Delta\right)v \right| |u|.
\end{split}
\]
In the right-hand side of the display above, the first summand tends to $0$ as $\varepsilon\to0$ by the convergence of the approximate identity $\phi_\varepsilon*(i\d_t+\Delta)v$ in $L^{p'}(\R^n)$, while the second summand tends to $0$ as $R\to\infty$ by dominated convergence. Consequently, $\mathrm I(\ve,R(\ve))\to 0$ as $\varepsilon\to0$ with any choice of $R=R(\varepsilon)\to\infty$ as $\ve \to 0$.

We estimate the worst term $\mathrm{III}(\varepsilon,R)$. By the assumptions on $\chi,\phi$ and \eqref{eq:scalederiv}, we have
\[
\left|\nabla(\chi^R)\cdot\nabla(v*\phi_\varepsilon)\right|= \ve^{-1} \left|\nabla(\chi^R)\cdot (v*(\nabla\phi)_\varepsilon)\right| \lesssim R^{-1}\varepsilon^{-1} \ind_{\{R\le |x|\le 2R\}}  \left(|v|*|(\nabla\phi)_\varepsilon|\right).
\]
Since $\supp (\nabla\phi)_\varepsilon \subset B(0,\varepsilon)$, it follows that, provided
$\varepsilon\ll R$,
\[
\ind_{\{R\le |x|\le 2R\}}\left(|v|*|(\nabla\phi)_\varepsilon|\right) \leq \left(\ind_{\{|x|\simeq R\}}|v|\right)*|(\nabla\phi)_\varepsilon|.
\]
Therefore,
\[
\begin{split}
\mathrm{III}(\varepsilon,R) &\lesssim R^{-1}\varepsilon^{-1}\int_\Sigma \left[\left(\ind_{\{|x|\simeq R\}}|v|\right)*|(\nabla\phi)_\varepsilon|\right](t,x) |u(t,x)|\,\dd x\,\dd t
\\
&\le R^{-1}\varepsilon^{-1}  \left\|\left(\ind_{\{|x|\simeq R\}}|v|\right)*|(\nabla\phi)_\varepsilon|\right\|_{L^1((0,T);L^2(\R^n))} \|u\|_{C([0,T];L^2(\R^n))}
\\
&\le R^{-1}\varepsilon^{-1} \|v\|_{L^1((0,T);L^2(\{|x|\simeq R\}))}\|u\|_{C([0,T];L^2(\R^n))} 
\\
&\lesssim R^{-1}\varepsilon^{-1}  R^{n(\frac12-\frac1p)} \|v\|_{L^1((0,T);L^p(\{|x|\simeq R\}))}\|u\|_{C([0,T];L^2(\R^n))} .
\end{split}
\]
Here, we used Cauchy--Schwarz in $x$ and H\"older in $t$ to pass to the second line, Young's inequality in $x$ in passing to the third line, together with $\|(\nabla\phi)_\varepsilon\|_{L^1(\R^n)}\lesssim 1$, and finally H\"older on the annulus $\{|x|\simeq R\}$ to pass from $L^2(\{|x|\simeq R\})$ to $L^p(\{|x|\simeq R\})$ with $p\geq2$. Using the calculation
\[
-1+n\left(\frac12-\frac1p\right)= n\left(\frac1{p_n}-\frac1p\right), \qquad  \frac1{p_n}=\frac12-\frac1n,
\]
and writing $R=R(\varepsilon)$, we have proved
\[
\mathrm{III}(\varepsilon,R(\varepsilon)) \lesssim \varepsilon^{-1}R(\varepsilon)^{-n(\frac1p-\frac1{p_n})} \|v\|_{L^1((0,T);L^p(\{|x|\simeq R(\varepsilon)\}))} \|u\|_{C([0,T];L^2(\R^n))}.
\]

If $p<p_n$, then $n(\frac1p-\frac1{p_n})>0$, and choosing
\[
R(\varepsilon)\coloneqq \varepsilon^{-\frac{2}{n(\frac1p-\frac1{p_n})}}
\]
gives $\mathrm{III}(\varepsilon,R(\varepsilon))\lesssim \varepsilon$, hence $\mathrm{III}(\varepsilon,R(\varepsilon))\to0$ as $\varepsilon\to0$.

In the endpoint case $p=p_n$ we have to be more careful since we get that
\[
\mathrm{III}(\ve,R(\ve))
\leq \frac{1}{\ve}  \|u\|_{C([0, T]; L^2(\R^n))}  \|v\|_{L^1\left((0, T); L^{ p_n}(|x|\simeq R(\ve))\right)} .
\]
Notice that
\[
v\ind_{\{|x|\simeq R\}}\ind _{(0,T)} \in C\left([0,T];L^{p_n}(\R^n)\right) \subset L^1\left((0,T);L^{p_n}(\R^n)\right),
\]
and that
\[
v(t,x)\ind_{\{|x|\simeq R\}}\ind_{(0,T)}(t)\to0 \quad\text{as }R\to\infty
\]
for every $(t,x)\in\Sigma$. By dominated convergence,
\[
\lim_{R\to\infty} \|v\|_{L^1((0,T);L^{p_n}(\{|x|\simeq R\}))}=0.
\]
Hence, for every $\ve>0$ there exists $R(\ve)>0$ which can be chosen to also satisfy $R(\ve)>\ve^{-1}$ such that $\|  v \|_{L^1((0,T); L^{p_n}(\{|x|\simeq R\}))}<\ve ^2$ for all $R\geq R(\ve)$. This shows that in the endpoint case $p=p_n$ one can choose $R(\ve)$, with $\lim_{\ve\to0 }R(\ve)=+\infty$, so that
\[
\mathrm{III}(\ve,R(\ve)) \lesssim \frac{1}{\ve}  \|u\|_{C([0, T]; L^2(\R^n))}  \|v\|_{L^1((0, T); L^{p_n}(|x|\simeq R(\ve)))} \to 0 \quad \text{as}\quad \ve\to 0.
\]

The term $\mathrm{II}(\varepsilon,R(\varepsilon))$ is estimated similarly. In particular, in the endpoint case $p=p_n$ we obtain
\[
\mathrm{II}(\ve,R(\ve)) \lesssim  \frac{1}{R(\ve)} \|u\|_{C([0, T]; L^2(\R^n))} \|v\|_{L^1((0, T); L^{p_n}(\R^n))} \to 0 \quad \textup{as} \quad \ve\to 0. 
\]
Combining the estimates we see that there is a choice $R(\ve)\to \infty$ when $\ve\to 0$ such that 
\[
\lim_{\ve\to 0} \left[\mathrm{I}(\ve,R(\ve))+\mathrm{II}(\ve,R(\ve))+\mathrm{III}(\ve,R(\ve))\right]=0
\]
so the right hand side of~\eqref{eq: Cor_int_by_parts_approx2_split} tends to 0 as $\ve\to 0$. This  shows~\eqref{eq: Cor_int_by_parts_approx2} and completes the proof of  Proposition~\ref{cor:integration_by_parts_unbounded_no-endopint2}. 
\end{proof}

With this in hand we proceed with the following integration by parts formula for complex exponential solutions. 

\begin{lemma}\label{lem:int_by_parts_complex-exp_Lp}\sl Let $(r,p)$ be a Strichartz pair with $p\ge q_n$, and assume in addition that $p\le p_n$ if $n\ge3$, or $p<\infty$ if $n=2$. Let
\[
u\in C\left([0,T];L^2(\R^n)\right )\cap L^r\left((0,T);L^p(\R^n)\right), \qquad u(0,\cd)=u(T,\cd)=0,
\]
and suppose that
\[
(i\partial_t+\Delta)u\in L^1(\Sigma)\cap L^{r'}\big((0,T);L^{p'}(\R^n)\big).
\]
Then
\[
\int_\Sigma e^{i(\lambda^2 t+\lambda\omega\cdot x)}  (i\partial_t+\Delta)u = 0,
\]
for every $\lambda>0$ and every $\omega\in\Sph^{n-1}$.
\end{lemma}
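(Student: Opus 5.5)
The plan is to view the integral as $\int_\Sigma (i\partial_t+\Delta)u\,\ovl{v}$ with $v(t,x)\coloneqq e^{-i(\lambda^2 t+\lambda\omega\cdot x)}$. This $v$ satisfies $(i\partial_t+\Delta)v=(\lambda^2-\lambda^2)v=0$. It is not directly admissible in Proposition~\ref{cor:integration_by_parts_unbounded_no-endopint2}, however, because it is unimodular and hence not in $L^p(\R^n)$. I will therefore replace the plane wave by Herglotz-type superpositions of plane waves with directions near $\omega$. These still solve the free equation and lie in $L^p$ thanks to the Tomas--Stein extension theorem. I will then pass to the limit using the hypothesis $(i\partial_t+\Delta)u\in L^1(\Sigma)$.

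\emph{Step 1 (approximating solutions).} Take nonnegative $g_\delta\in C^\infty(\Sph^{n-1})$ with $\int_{\Sph^{n-1}} g_\delta\,\dd\sigma=1$ and $\supp g_\delta\subset\{\theta\in\Sph^{n-1}:|\theta-\omega|<\delta\}$. Define
\[
v_\delta(t,x)\coloneqq e^{-i\lambda^2 t}\int_{\Sph^{n-1}} e^{-i\lambda\theta\cdot x}g_\delta(\theta)\,\dd\sigma(\theta).
\]
Differentiating under the integral gives $(i\partial_t+\Delta)v_\delta=0$ pointwise, and in particular this lies in $L^{r'}((0,T);L^{p'}(\R^n))$. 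By the Tomas--Stein theorem, rescaled to the sphere of radius $\lambda$, the spatial factor belongs to $L^p(\R^n)$ for every $p\ge q_n$. This is exactly why the hypothesis $p\ge q_n$ is imposed. The time dependence is only the unimodular factor $e^{-i\lambda^2 t}$, so $v_\delta\in C([0,T];L^p(\R^n))$. Alternatively, for smooth $g_\delta$ one can invoke the decay $|x|^{-(n-1)/2}$ from stationary phase, which gives $L^p$ membership for $p>2n/(n-1)$, a range that contains $[q_n,\infty)$.

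\emph{Step 2 (apply Proposition~\ref{cor:integration_by_parts_unbounded_no-endopint2}).} The hypotheses on $u$ are exactly those of that proposition: $u\in C([0,T];L^2)\cap L^r((0,T);L^p)$, $u(0,\cd)=u(T,\cd)=0$, and $(i\partial_t+\Delta)u\in L^{r'}((0,T);L^{p'})$. Applying it with $v_\delta$ gives
\[
\int_\Sigma (i\partial_t+\Delta)u\,\ovl{v_\delta}=\int_\Sigma u\,\ovl{(i\partial_t+\Delta)v_\delta}=0.
\]
The restriction $p\le p_n$ for $n\ge3$ (or $p<\infty$ for $n=2$) is only needed so that $(r,p)$ is an admissible Strichartz pair with $p\ge q_n$, which keeps us inside the framework of the earlier results.

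\emph{Step 3 (limit $\delta\to0$).} We have $|v_\delta|\le\int g_\delta\,\dd\sigma=1$. Since $\theta\mapsto e^{-i\lambda\theta\cdot x}$ is continuous and $g_\delta\,\dd\sigma$ concentrates at $\omega$, $v_\delta(t,x)\to e^{-i(\lambda^2t+\lambda\omega\cdot x)}$ for every $(t,x)$. Because $(i\partial_t+\Delta)u\in L^1(\Sigma)$, dominated convergence yields $\int_\Sigma e^{i(\lambda^2t+\lambda\omega\cdot x)}(i\partial_t+\Delta)u=0$.

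The only genuinely delicate point is Step 1: guaranteeing $v_\delta\in C([0,T];L^p)$ in the full range $p\ge q_n$, including the endpoint $p=q_n$. This is handled by Tomas--Stein rather than by crude decay estimates. Everything else is a direct application of the previous proposition plus dominated convergence, and the latter is where the extra $L^1(\Sigma)$ assumption enters.
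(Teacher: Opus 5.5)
Your proof is correct, but it uses a different family of approximants from the paper's.

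\emph{The paper's route.} The paper approximates $\psi=e^{-i(\lambda^2t+\lambda\omega\cdot x)}$ by the free evolutions $\psi_\ve=e^{it\Delta}f^\ve$ of the truncated plane waves $f^\ve(x)=\chi(\ve x)e^{-i\lambda\omega\cdot x}$. These are Schwartz in $x$. They converge pointwise to $\psi$, via the Fourier representation and dominated convergence, and satisfy $|\psi_\ve-\psi|\le 1+(2\pi)^{-n/2}\|\widehat\chi\|_{L^1(\R^n)}$. The same dominated-convergence step as yours, using $(i\partial_t+\Delta)u\in L^1(\Sigma)$, then finishes.

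\emph{Your route.} You instead average the direction over a small cap of the sphere, which produces exact time-harmonic Herglotz solutions at the same energy $\lambda^2$. This makes pointwise convergence and the bound $|v_\delta|\le 1$ immediate. The cost is that Herglotz waves are never in $L^2(\R^n)$. So you genuinely need the extended identity of Proposition~\ref{cor:integration_by_parts_unbounded_no-endopint2}, which allows $v$ to lie only in $C([0,T];L^p(\R^n))$. You also need Tomas--Stein, or just the $|x|^{-(n-1)/2}$ stationary-phase decay, which suffices since $q_n>2n/(n-1)$, to place $v_\delta$ in $L^p$. That is exactly where the hypothesis $p\ge q_n$ enters your argument.

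\emph{Comparison.} In the paper's route, $\psi_\ve\in C([0,T];L^2(\R^n))\cap L^r((0,T);L^p(\R^n))$ for every Strichartz pair. Hence the basic identity of Proposition~\ref{prp:integration_by_parts_unbounded_no-endpoint} with vanishing boundary terms already suffices, even though the paper cites the extended version, and the hypothesis $p\ge q_n$ is never used. The paper's argument is therefore more elementary and slightly more general. Yours stays closer to the fixed-energy, stationary-state framework of Section~\ref{sec:timeindependent_solutions} and gives a natural reason for a restriction on $p$.
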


\begin{proof} We first construct an approximation of the function $\psi(t,x)\coloneqq e^{  -i \lambda^2 t - i\lambda \omega \cdot x }$. Our favorite way of doing this is the following. Consider $\chi \in \mathcal S (\R^n)$ such that $0 \leq \chi (x) \leq 1$ for all $x \in \R^n$ and $\supp \chi \subset \{ x \in \R^n : \, |x| < 1/2 \}$. We will also choose $\chi$ so that 
\[
\chi(0)=\frac{1}{(2\pi)^{n/2}}\int_{\R^n} \widehat{\chi}(\eta)\,\dd \eta=1.
\]
For $\varepsilon > 0$, define
\[ 
f^\ve (x) \coloneqq \chi(\ve x) e^{-i \lambda \omega \cdot x}\eqqcolon \chi^\ve (x) e^{-i \lambda \omega \cdot x}, \qquad  x \in \R^n. 
\]
Clearly $f^\ve \in \mathcal S(\R^n)$ for each $\ve>0$. Now let $\psi_\ve: \R\times \R^n\to \C$ be defined as 
\[
\psi_\ve (t,x)\coloneqq e^{it\Delta}(f^\ve)(x)=\frac{1}{(2\pi)^{n/2}}\int_{\R^n} e^{-i|\eta|^2t + i \eta\cdot x} \widehat{f^\ve}(\eta)\, \dd \eta,\qquad (t,x)\in \R\times \R^n.
\]
By construction we have that $(i\partial_t+\Delta)\psi_\ve=0$ in $\R\times \R^n$ and $\psi_\ve \in C^\infty(\R;\mathcal S(\R^n))$. In particular $\psi_\ve\in C([0,T];L^2(\R^n))\cap L^{r}((0,T);L^p(\R^n))$ for any Strichartz pair $(r,p)$. 

Now, consider a function $u$ as in the assumption and apply Proposition~\ref{cor:integration_by_parts_unbounded_no-endopint2} to the functions $u,\psi_\ve$ as above to get
\[
 \int_{\Sigma}  \left(i\partial_t+\Delta\right)u  \ovl{\psi_\ve} = \int_{\Sigma} u  \ovl{\left(i\partial_t+\Delta\right)\psi_\ve}  = 0.     
\]
Noting that $\widehat {f^\ve}(\eta)=\widehat{\chi^\ve}(\eta+\lambda \omega)=\ve^{-n}\widehat{\chi}((\eta+\lambda\omega)/\ve)$, we write for $(t,x)\in \Sigma$
\[
\begin{split}
\psi_\ve (t,x)-\psi(t,x)& = \frac{1}{(2\pi)^{n/2}}\int_{\R^n} \widehat{f^\ve}(\eta) e^{-i|\eta|^2t + i\eta\cdot x}\, \dd \eta - e^{-i\lambda^2 t - i\lambda\omega\cdot x}
\\
&= \frac{1}{(2\pi)^{n/2}}\int_{\R^n}\widehat{\chi^\ve}(\eta+\lambda\omega) e^{-i|\eta|^2t+i\eta\cdot x}\, \dd \eta - e^{-i\lambda^2 t -i\lambda\omega\cdot x}
\\
&= \frac{1}{(2\pi)^{n/2}}\int_{\R^n}\frac{1}{\ve^n}\widehat{\chi}\left(\frac{\xi}{\ve}\right) e^{-i|\xi-\lambda\omega|^2t +i(\xi-\lambda\omega)\cdot x}\, \dd \xi - e^{-i\lambda^2 t - i\lambda\omega\cdot x}
\\
&=\frac{e^{-i\lambda^2 t - i\lambda\omega\cdot x}}{(2\pi)^{n/2}}
\int_{\R^n} \widehat{\chi }(\eta ) \left[e^{-i\left(\ve^2|\eta|^2-2\ve \lambda \omega\cdot \eta\right)t + i\ve\eta\cdot x} - 1\right]\, \dd \eta ,
\end{split}
\]
by our assumption $\chi(0)=1$. Since $\chi\in\mathcal S(\R^n)$, we have $\psi_\ve(t,x)\to \psi(t,x)$ as $\ve \to 0$ for every $(t,x)\in \R\times\R^n$ by dominated convergence. The conclusion now follows by yet another application of dominated convergence since
\[
\left| \ovl{(\psi_\ve-\psi)}{\left(i\partial_t+\Delta\right)u}\right|\leq \left(1+(2\pi)^{-n/2}\| \widehat{\chi} \|_{L^1(\R^n)}\right)\left|\left(i\partial_t+\Delta\right)u\right|\in L^1(\Sigma)
\]
since by our assumption $(i\partial_t+\Delta)u\in L^1(\Sigma)$.
\end{proof}

\subsection{Orthogonality formulas} We now relate the initial-to-final-state maps to the difference of the corresponding potentials via solutions of \eqref{eq:Schrodinger}. When $\mathcal U_T^1=\mathcal U_T^2$, this yields the basic orthogonality relation \eqref{eq:orthogonality_intro} for physical solutions.

\begin{proposition}\label{prp:CR_prop4.1_unbounded}\sl Let $V_1, V_2 \in L^q(\R^n)$ for some $q\in(1,\infty]$ if $n=2$, or for some $q\in[n/2,\infty]$ if $n\geq 3$, and let $(r, p)$ be a Strichartz pair with $\frac{1}{q}+\frac{2}{p}=1$.  For every fixed $T>0$ and every $f, g \in L^2(\R^n)$ we have that 
\[
i\int_{\R^n} (\cal{U}^1_T - \cal{U}^2_T) f\ovl{g} = \int_{\Sigma} (V_1 - V_2) u_1 \ovl{v_2},
\]
where $u_1 \in C([0, T]; L^2 (\R^n)) \cap L^r((0, T); L^p(\R^n))$ is a solution of \eqref{eq:Schrodinger} with potential $V_1$ and initial data $f$, while $v_2 \in C([0, T]; L^2 (\R^n)) \cap L^r((0, T); L^p(\R^n))$ is the physical solution of the following final-value problem 
\begin{equation}\label{eq:FVP}
\begin{cases}
i\d_t v_2 = -\Delta v_2 + \ovl{V_2} v_2 \quad &\mathrm{in} \quad \Sigma,
\vspace{.6em}
\\
v_2(T, \cd) = g  &\mathrm{in} \quad \R^n.
\end{cases}
\end{equation}
\end{proposition}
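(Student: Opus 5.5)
We apply the integration by parts identity of Proposition~\ref{prp:integration_by_parts_unbounded_no-endpoint} to the pair $u=u_1$, $v=v_2$. By the well-posedness theory recalled in \S\ref{subsec: Stri_WP}, $u_1$ and $v_2$ both lie in $C([0,T];L^2(\R^n))\cap L^r((0,T);L^p(\R^n))$. For $v_2$ this requires solving the final-value problem \eqref{eq:FVP} backwards in time, which is legitimate by time-reversal since $\ovl{V_2}\in L^q(\R^n)$ as well.

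Since $(i\partial_t+\Delta)u_1=V_1u_1$ and $(i\partial_t+\Delta)v_2=\ovl{V_2}v_2$, Remark~\ref{rmrk:VuinLp'} shows that both right-hand sides belong to $L^{r'}((0,T);L^{p'}(\R^n))$. Here we use \eqref{eq:holder1}, which needs $r\ge 2$ and $T<\infty$. The hypotheses of Proposition~\ref{prp:integration_by_parts_unbounded_no-endpoint} are therefore met. The case $q=\infty$, $p=2$, $r=\infty$ is also covered, because then $V u\in L^\infty((0,T);L^2(\R^n))\subset L^1((0,T);L^2(\R^n))$.

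Applying the proposition, the left-hand side becomes
\[
\int_\Sigma \big[V_1u_1\ovl{v_2}-u_1\ovl{\ovl{V_2}v_2}\big]=\int_\Sigma (V_1-V_2)u_1\ovl{v_2},
\]
and each integral converges absolutely by \eqref{eq:holder} and Hölder in time. The right-hand side is $i\int_{\R^n}[u_1(T)\ovl{g}-f\ovl{v_2(0)}]$, using $u_1(T)=\mathcal U^1_Tf$ and $v_2(T)=g$.

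It remains to identify $\int f\ovl{v_2(0)}$ with $\int\mathcal U^2_Tf\,\ovl g$. For this, let $u_2=\mathcal U^2f$ solve \eqref{eq:Schrodinger} with potential $V_2$. Apply Proposition~\ref{prp:integration_by_parts_unbounded_no-endpoint} to the pair $(u_2,v_2)$. The integrand is $V_2u_2\ovl{v_2}-u_2\,\ovl{\ovl{V_2}v_2}=0$, so
\[
\int u_2(T)\ovl{g}=\int f\ovl{v_2(0)},
\]
which is the $L^2$-duality $(\mathcal U^2_T)^*g=v_2(0)$. Substituting this into the previous identity gives $i\int(\mathcal U^1_T-\mathcal U^2_T)f\,\ovl g$, as claimed.

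The main point to check is the membership $(i\partial_t+\Delta)u\in L^{r'}L^{p'}$ at the endpoint pair $(r,p)=(2,p_n)$. There $r'=2=r$, so \eqref{eq:holder1} applies with the time factor $T^0$. No cancellation is needed beyond the pointwise identity $V_2u_2\ovl{v_2}=u_2\ovl{\ovl{V_2}v_2}$.
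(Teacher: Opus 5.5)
Your proposal is correct and is essentially the argument the paper intends: it omits the proof and refers to Caro--Ruiz, applying the integration by parts identity of Proposition~\ref{prp:integration_by_parts_unbounded_no-endpoint} to $u_1$ and $v_2$, with the required $L^{r'}((0,T);L^{p'}(\R^n))$ bounds coming from \eqref{eq:holder1}. Applying the identity to $(u_1,v_2)$ and to $(u_2,v_2)$ and subtracting is, by linearity, the same as applying it once to $u_1-u_2$. That difference solves $(i\partial_t+\Delta-V_2)(u_1-u_2)=(V_1-V_2)u_1$ with zero initial data, which is how the Caro--Ruiz computation (and the paper's proof of Lemma~\ref{lem:CR_lemma4.4_Lp}) is arranged.
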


We omit the proof of the proposition above since it follows exactly the same lines as~\cite{caro2025initialtofinalstateinverseproblemunbounded}*{Proposition 5.1}, by using the integration by parts formula of Proposition \ref{prp:integration_by_parts_unbounded_no-endpoint} in our case. 

We record the following lemma, which converts an orthogonality condition on the source term F against physical final-state solutions into a vanishing final condition for the corresponding inhomogeneous solution.
\begin{lemma}\label{lem:CR_lemma4.4_Lp}\sl Consider $V\in L^q(\R^n)$ for some $q\in(1,\infty]$ if $n=2$, or for some $q\in[n/2,\infty]$ if $n\geq 3$, let $(r, p)$ be a Strichartz pair with $\frac{1}{q}+\frac{2}{p}=1$, and $F\in L^{r'}((0,T); L^{p'}(\R^n))$ with 
\[
\int_\Sigma F \ovl{v} = 0
\]
for every $v\in C([0, T]; L^2(\R^n)) \cap L^r((0, T); L^p(\R^n))$  which is the solution of the final-value problem 
\[
\begin{cases}
i\d_t v = - \Delta v + \ovl{V} v\quad &\textrm{in}\quad \Sigma,
\vspace{.6em}
\\
v(T, \cd)= g  \quad &\textrm{in}\quad \R^n.
\end{cases}
\]
with arbitrary $g\in L^2(\R^n)$. Then, the solution $u\in C([0, T]; L^2(\R^n)) \cap L^r((0, T); L^p(\R^n))$ of the problem
\[
\begin{cases}
\left(i\partial_t +\Delta\right)u-Vu=F\quad &\textrm{in}\quad \Sigma,
\vspace{.6em}
\\
u(0,\cd)=0, \quad &\textrm{in}\quad \R^n.
\end{cases}
\]
satisfies $u(T, \cd) = 0. $
\end{lemma}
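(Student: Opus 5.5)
The plan is a duality argument built on Proposition~\ref{prp:integration_by_parts_unbounded_no-endpoint}. Fix an arbitrary $g\in L^2(\R^n)$ and let $v$ be the physical solution of the final-value problem with potential $\ovl V$ and $v(T,\cd)=g$. Well-posedness is available for the backward problem by time reversal (conjugation), with the same Strichartz exponents. Then
\[
v\in C([0,T];L^2(\R^n))\cap L^r((0,T);L^p(\R^n)).
\]
Since $(i\partial_t+\Delta)v=\ovl V v$, Remark~\ref{rmrk:VuinLp'} gives $(i\partial_t+\Delta)v\in L^{r'}((0,T);L^{p'}(\R^n))$. Likewise $(i\partial_t+\Delta)u=F+Vu\in L^{r'}((0,T);L^{p'}(\R^n))$, using the hypothesis on $F$ together with \eqref{eq:holder1}. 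Both $u$ and $v$ therefore satisfy the hypotheses of Proposition~\ref{prp:integration_by_parts_unbounded_no-endpoint}.

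Applying that proposition, and using $u(0,\cd)=0$ and $v(T,\cd)=g$, we obtain
\[
\int_\Sigma \left[(F+Vu)\ovl v - u\,\ovl{\ovl V v}\right] = i\int_{\R^n} u(T,\cd)\ovl g .
\]
The potential terms cancel, because $u\,\ovl{\ovl V v}=Vu\ovl v$. This cancellation is legitimate only if each term is separately integrable, which holds by the Hölder bound \eqref{eq:holder} in space and Hölder in time, since $u,v\in L^r L^p$ and $V\in L^q$ with $\frac1q+\frac2p=1$; the finiteness of $T$ handles the case $r>2$. The identity then reduces to
\[
\int_\Sigma F\ovl v = i\int_{\R^n}u(T,\cd)\ovl g.
\]

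By hypothesis the left-hand side vanishes for every such $v$, so $\int_{\R^n}u(T,\cd)\ovl g=0$ for all $g\in L^2(\R^n)$. Since $u(T,\cd)\in L^2(\R^n)$, choosing $g=u(T,\cd)$ gives $u(T,\cd)=0$.

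The only step needing care is the justification above: that the backward problem is well posed in the same Strichartz class, and that all the pairings are absolutely convergent so the terms $Vu\ovl v$ can be split off and cancelled. Both follow from the well-posedness results quoted in \S\ref{subsec: Stri_WP} and the Hölder relation, so I do not expect a serious obstacle.
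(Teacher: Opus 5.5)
Your proposal is correct and follows the paper's proof: you apply Proposition~\ref{prp:integration_by_parts_unbounded_no-endpoint} to $u$ and the backward solution $v$ with $v(T,\cd)=g$, cancel the potential terms to obtain $\int_\Sigma F\ovl v = i\int_{\R^n}u(T,\cd)\ovl g$, and conclude from the arbitrariness of $g$. Your explicit check that each potential term is absolutely integrable, via H\"older in space and time, is a justification the paper leaves implicit.
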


The proof is in the same spirit as~\cite{caro2025initialtofinalstateinverseproblemunbounded}*{Lemma 5.3}, but we include it also here for reader's convenience since it is rather short.

\begin{proof} Let $g\in L^2(\R^n)$ and $v$ be the corresponding solution of the final-value problem in the statement of the lemma. Under these assumptions, Proposition~\ref{prp:integration_by_parts_unbounded_no-endpoint} applies to $v$ yielding
\begin{align*}
i \int_{\R^n} u(T, \cd) \ovl{g} &= \int_\Sigma \left[ \left(i \d_t +\Delta\right)u \ovl{v} - u \ovl{(i\d_t + \Delta)v} \right]
= \int_\Sigma \left[ \left(i \d_t +\Delta - V\right)u\ovl{v} - u \ovl{\left(i \d_t + \Delta - \ovl{V}\right)v}  \right] 
\\
&= \int_\Sigma F \ovl{v}  = 0.
\end{align*}
Since this holds for every $g\in L^2(\R^n)$, we  conclude that $u(T, \cd) = 0$. 
\end{proof}

The following lemma is a symmetric version of Lemma~\ref{lem:CR_lemma4.4_Lp}; we omit the completely analogous proof.

\begin{lemma}\label{lem:CR_lemma4.4_Lp_symmetric}\sl Consider $V\in  L^q(\R^n)$ for some $q\in(1,\infty]$ if $n=2$, or for some $q\in[n/2,\infty]$ if $n\geq 3$, let $(r, p)$ be a Strichartz pair with $\frac{1}{q}+\frac{2}{p}=1$, and $G\in L^{r'}((0,T); L^{p'}(\R^n))$ such that 
\[
\int_\Sigma \ovl{G} u = 0
\]
for every $u \in C([0, T]; L^2(\R^n)) \cap L^r((0, T); L^p(\R^n)) $ which is the solution of the initial-value problem \eqref{eq:Schrodinger} for arbitrary $f\in L^2(\R^n)$. Then, the solution $v \in C([0, T]; L^2(\R^n)) \cap L^r((0, T); L^p(\R^n))$ of the problem 
\[
\begin{cases}
\left(i\d_t + \Delta - \ovl{V}\right) v = G \quad &\textrm{in}\quad \Sigma, 
\vspace{.6em}\\
v(T, \cd) = 0  \quad &\textrm{in}\quad \R^n.
\end{cases} 
\]
satisfies that $v(0, \cd)=0$.
\end{lemma}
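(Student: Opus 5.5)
The plan is to mirror the proof of Lemma~\ref{lem:CR_lemma4.4_Lp}, with the roles of initial and final times exchanged. Fix an arbitrary $f\in L^2(\R^n)$ and let $u=\mathcal U f\in C([0,T];L^2(\R^n))\cap L^r((0,T);L^p(\R^n))$ be the physical solution of \eqref{eq:Schrodinger} with potential $V$ and initial datum $f$. Let $v$ be the solution of the final-value problem in the statement. It exists and lies in $C([0,T];L^2(\R^n))\cap L^r((0,T);L^p(\R^n))$ by the well-posedness results recalled in \S\ref{subsec: Stri_WP}, applied after the time reversal $t\mapsto T-t$ and complex conjugation, which turn a final-value problem with potential $\ovl V$ into an initial-value problem with potential $V$.

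The first step is to check the hypotheses of Proposition~\ref{prp:integration_by_parts_unbounded_no-endpoint} for the pair $(u,v)$. By Remark~\ref{rmrk:VuinLp'} we have
\[
(i\partial_t+\Delta)u=Vu\in L^{r'}((0,T);L^{p'}(\R^n)),
\]
and likewise
\[
(i\partial_t+\Delta)v=G+\ovl V v\in L^{r'}((0,T);L^{p'}(\R^n)),
\]
using the assumption on $G$ together with \eqref{eq:holder1}.

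The second step applies the identity of Proposition~\ref{prp:integration_by_parts_unbounded_no-endpoint}:
\[
\int_\Sigma\left[(i\partial_t+\Delta)u\,\ovl v-u\,\ovl{(i\partial_t+\Delta)v}\right]=i\int_{\R^n}\left[u(T,\cd)\ovl{v(T,\cd)}-u(0,\cd)\ovl{v(0,\cd)}\right].
\]
Since $v(T,\cd)=0$ and $u(0,\cd)=f$, the right-hand side equals $-i\int_{\R^n} f\,\ovl{v(0,\cd)}$. On the left-hand side we add and subtract the potential terms. Because $\int_\Sigma Vu\,\ovl v=\int_\Sigma u\,\ovl{\ovl V v}$, and this integral is finite by \eqref{eq:holder}, the left-hand side becomes
\[
\int_\Sigma (i\partial_t+\Delta-V)u\,\ovl v-\int_\Sigma u\,\ovl{(i\partial_t+\Delta-\ovl V)v}=0-\int_\Sigma u\,\ovl G=0,
\]
where the last equality is the hypothesis on $G$. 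Hence $\int_{\R^n} f\,\ovl{v(0,\cd)}=0$ for every $f\in L^2(\R^n)$. Taking $f=v(0,\cd)\in L^2(\R^n)$ gives $v(0,\cd)=0$.

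The only step needing some care is the first: making sure the well-posedness theory applies to the backward problem with potential $\ovl V$ and a source $G$ in the dual Strichartz space, and that Remark~\ref{rmrk:VuinLp'} places both $(i\partial_t+\Delta)u$ and $(i\partial_t+\Delta)v$ in $L^{r'}L^{p'}$. This works because $\ovl V\in L^q(\R^n)$ has the same norm as $V$ and $T<\infty$. Once that is in place, the remainder is the algebraic identity above.
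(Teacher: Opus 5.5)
Your proposal is correct and follows essentially the same approach as the paper. The paper omits this proof as completely analogous to Lemma~\ref{lem:CR_lemma4.4_Lp}, and that analogous argument is exactly yours: apply Proposition~\ref{prp:integration_by_parts_unbounded_no-endpoint} to $u=\mathcal U f$ and $v$, use $v(T,\cd)=0$, $u(0,\cd)=f$ and the hypothesis on $G$ to get $\int_{\R^n} f\,\ovl{v(0,\cd)}=0$ for every $f\in L^2(\R^n)$, and conclude $v(0,\cd)=0$.
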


We are now in a position to prove the main orthogonality relation of this paper, in which the physical solutions appearing in Proposition~\ref{prp:CR_prop4.1_unbounded} are replaced by the stationary state solutions constructed in Section~\ref{sec:timeindependent_solutions}.

\begin{proposition}\label{pr:int_by_parts_time-harmonic_solutions}\sl Let $V_1,V_2\in L^1(\R^n)\cap L^q(\R^n)$, where $q\in(1,(n+1)/2]$ if $n=2$, or $q\in[n/2,(n+1)/2]$ if $n\geq 3$, and define $p$ by 
\[
  \frac{1}{q}+\frac{2}{p}=1.
\]
Let $\mathcal U_T^1,\mathcal U_T^2$ denote the initial-to-final-state maps corresponding to $V_1,V_2$, respectively.

Assume that $\mathcal U_T^1=\mathcal U_T^2$. Then, for every $\lambda>0$, every $\omega_1,\omega_2\in\Sph^{n-1}$, and every pair of functions
\[
\psi_j(t,x)=e^{-i\lambda^2 t}\left(w_j^{(0)}(x)+w_j^{\mathrm{cor}}(x)\right),
\qquad j\in\{1,2\},
\]
with
\[
w_j^{(0)}(x)=e^{-i\lambda\omega_j\cdot x},  \qquad w_j^{\mathrm{cor}}\in L^p(\R^n),
\]
satisfying
\[
\left(i\partial_t+\Delta-V_1\right)\psi_1=0, \qquad \left(i\partial_t+\Delta-\overline{V_2}\right)\psi_2=0,
\]
the following orthogonality relation holds:
\begin{equation}\label{eq:int_by_parts_time-harmonic} \int_\Sigma (V_1-V_2) \psi_1 \overline{\psi_2}=0.
\end{equation}

In particular, \eqref{eq:int_by_parts_time-harmonic} holds for the stationary-state solutions constructed in Corollaries~\ref{cor:solutions} and~\ref{cor:solutionsVcrit}.
\end{proposition}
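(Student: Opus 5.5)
The plan is a two-step bootstrap that moves from physical solutions to stationary states one slot at a time, using Lemmas~\ref{lem:CR_lemma4.4_Lp}, \ref{lem:CR_lemma4.4_Lp_symmetric} together with Proposition~\ref{cor:integration_by_parts_unbounded_no-endopint2} and Lemma~\ref{lem:int_by_parts_complex-exp_Lp}.

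\emph{Step 1: replace $v_2$ by $\psi_2$.} By Proposition~\ref{prp:CR_prop4.1_unbounded} and $\mathcal U_T^1=\mathcal U_T^2$ we have $\int_\Sigma (V_1-V_2)u_1\overline{v_2}=0$ for every physical $u_1$ (potential $V_1$) and every physical final-value solution $v_2$ (potential $\overline{V_2}$). Fix a physical $u_1$ and set $F\coloneqq (V_1-V_2)u_1$. By \eqref{eq:holder1}, $F\in L^{r'}((0,T);L^{p'}(\R^n))$, where $(r,p)$ is the Strichartz pair with $\frac1q+\frac2p=1$.

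Let $u$ solve $(i\partial_t+\Delta-V_2)u=F$ with $u(0,\cdot)=0$. Lemma~\ref{lem:CR_lemma4.4_Lp}, applied with $V=V_2$, gives $u(T,\cdot)=0$. Observe that $u$ is just $u_1$ minus the $V_2$-evolution of $f=u_1(0)$.

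Next we test against $\psi_2$. Write $\psi_2=e^{-i\lambda^2t}w_2^{(0)}+e^{-i\lambda^2 t}w_2^{\mathrm{cor}}$. The second piece lies in $C([0,T];L^p(\R^n))$, and the first piece is handled by Lemma~\ref{lem:int_by_parts_complex-exp_Lp}. For that lemma we need $(i\partial_t+\Delta)u=F+V_2u\in L^1(\Sigma)$. This holds because $V_j\in L^1\cap L^q$ and $u_1,u\in C([0,T];L^2)\cap L^rL^p$: since $L^1\cap L^q\subset L^{s}$ for $s\in[1,q]$, Hölder with $u\in L^2\cap L^p$ places $V u\in L^1_x$ (take $V\in L^2$ when $q\ge 2$, or interpolate otherwise). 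This integrability check is the only routine verification.

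With this in hand we obtain
\[
0=\int_\Sigma (i\partial_t+\Delta)u\,\overline{e^{-i\lambda^2t}w_2^{(0)}}.
\]
Proposition~\ref{cor:integration_by_parts_unbounded_no-endopint2}, applied to $v=e^{-i\lambda^2t}w_2^{\mathrm{cor}}$, moves the operator onto the correction term. Using the equation $(i\partial_t+\Delta)\psi_2=\overline{V_2}\psi_2$, this gives
\[
\int_\Sigma (i\partial_t+\Delta)u\,\overline{\psi_2}=\int_\Sigma u\,\overline{\overline{V_2}\psi_2}-\int_\Sigma u\,\overline{\overline{V_2}\,e^{-i\lambda^2t}w_2^{(0)}}.
\]
A cleaner route is to split $\psi_2$ and apply each result separately, then combine so that the $V_2u\overline{\psi_2}$ terms cancel, leaving
\[
\int_\Sigma F\overline{\psi_2}=\int_\Sigma(V_1-V_2)u_1\overline{\psi_2}=0.
\]
Here one must check that $(i\partial_t+\Delta)(e^{-i\lambda^2t}w_2^{\mathrm{cor}})=\overline{V_2}\psi_2-$ (free part), where the free part is $0$ since $w^{(0)}$ solves the free equation. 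Hence this equals $\overline{V_2}\psi_2$, which lies in $L^{r'}L^{p'}$ only up to the $\overline{V_2}w^{(0)}$ term. That term is bounded and in $L^{p'}$ because $V_2\in L^1\cap L^q\subset L^{p'}$.

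\emph{Step 2: replace $u_1$ by $\psi_1$.} Now $\int_\Sigma (V_1-V_2)u_1\overline{\psi_2}=0$ holds for every physical $u_1$. Set $G\coloneqq (\overline{V_1}-\overline{V_2})\psi_2$ and argue symmetrically. $G$ is in $L^{r'}L^{p'}$: the $w^{\mathrm{cor}}$ part follows by Hölder, and the $w^{(0)}$ part since $V\in L^{p'}$ and $T<\infty$. Lemma~\ref{lem:CR_lemma4.4_Lp_symmetric}, applied with $V=V_1$, produces $v$ solving $(i\partial_t+\Delta-\overline{V_1})v=G$ with $v(0)=v(T)=0$. Pairing $v$ against $\psi_1$ as in Step 1, again by Lemma~\ref{lem:int_by_parts_complex-exp_Lp} and Proposition~\ref{cor:integration_by_parts_unbounded_no-endopint2} with roles conjugated, yields $\int_\Sigma \overline{G}\psi_1=0$, which is \eqref{eq:int_by_parts_time-harmonic}.

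The main obstacle is the bookkeeping in the conjugated version of the lemmas together with the $L^1(\Sigma)$ hypothesis of Lemma~\ref{lem:int_by_parts_complex-exp_Lp}. This hypothesis now involves $G$, which contains the unimodular term $(V_1-V_2)e^{-i\lambda^2t}w_2^{(0)}$; it lies in $L^1(\Sigma)$ precisely because $V_j\in L^1$. This is where the $L^1$ assumption is essential. Finally, the statement for the solutions of Corollaries~\ref{cor:solutions} and \ref{cor:solutionsVcrit} follows since their $w^{\mathrm{cor}}$ lie in $L^p$ (with $L^{p_n}$ in the endpoint case, via $X_\lambda^*$).
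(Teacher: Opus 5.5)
Your proposal is correct and is essentially the paper's proof: your $u$ and $v$ are exactly the paper's auxiliary solutions $\Phi$ (forward problem with $V_2$ and source $(V_1-V_2)u_1$) and $\Xi$ (backward problem with $\overline{V_1}$ and source $\overline{(V_1-V_2)}\psi_2$), and in each step the plane-wave part is handled by Lemma~\ref{lem:int_by_parts_complex-exp_Lp} and the correction by Proposition~\ref{cor:integration_by_parts_unbounded_no-endopint2}. One slip: your first displayed identity in Step~1 has a spurious last term, since $(i\partial_t+\Delta)(e^{-i\lambda^2t}w_2^{\mathrm{cor}})=\overline{V_2}\psi_2$ gives simply $\int_\Sigma (i\partial_t+\Delta)u\,\overline{\psi_2}=\int_\Sigma u\,\overline{\overline{V_2}\psi_2}$, which is what your subsequent cancellation actually uses.
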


\begin{proof}  We argue by combining the orthogonality relation for physical solutions with suitable auxiliary boundary value problems in order to obtain \eqref{eq:int_by_parts_time-harmonic}. This is implemented in two steps. 

\vspace{1em}
\noindent\textbf{Step 1:} In this first step, we prove that
\[
\cal{U}^1_T=\cal{U}^2_T \Rightarrow \int_\Sigma (V_1 - V_2) \Psi \ovl{\psi_2} = 0 ,
\]
for any $\Psi \in C([0, T]; L^2(\R^n)) \cap L^r((0, T); L^p(\R^n))$ satisfying $(i \d_t + \Delta - V_1)\Psi = 0$ in $\Sigma$. 

To that end, for any such solution \(\Psi\) as above, consider the problem
\begin{equation}\label{eq:initial_BVP_F}
\begin{cases}
\left(i \d_t + \Delta - V_2\right )\Phi = (V_1 - V_2)\Psi \eqqcolon F  \quad &\textrm{in}\quad \Sigma, 
\vspace{.6em}\\
\Phi(0, \cd) = 0  \quad &\textrm{in}\quad \R^n.
\end{cases}
\end{equation}
Using that $V_1-V_2 \in L^q(\R^n)$ and \eqref{eq:holder1} we have $F\in L^{r'}((0,T);L^{p'}(\R^n))$, and hence, by the discussion in \S\ref{subsec: Stri_WP}, there exists a unique solution \(\Phi\) of \eqref{eq:initial_BVP_F} satisfying
\[
\Phi\in C\left([0,T];L^2(\R^n)\right)\cap L^r\left((0,T);L^p(\R^n)\right).
\] 
Now, since $\mathcal U_T^1=\mathcal U_T^2$, Proposition~\ref{prp:CR_prop4.1_unbounded}
implies that
\[
\int_\Sigma (V_1-V_2)\Psi \overline{v_2}=0
\]
for every physical final-state solution $v_2$ associated with $\ovl{V_2}$. In particular, this holds for the physical solution $v_2$ of the final-value problem with potential $\ovl{V_2}$ and final data $g=\Phi(T,\cd)$. Applying Lemma~\ref{lem:CR_lemma4.4_Lp} with $u=\Phi$ and $V=\ovl{V_2}$, we conclude that
$\Phi(T,\cd)=0$.

We now plan to apply Lemma~\ref{lem:int_by_parts_complex-exp_Lp} with $\Phi$ in the place of $u$. For this, we need to verify that
\[
\left(i\partial_t+\Delta\right)\Phi \in L^1(\Sigma)\cap L^{r'}\left((0,T);L^{p'}(\R^n)\right) \quad\text{and}\quad \Phi(0,\cd)=\Phi(T,\cd)=0.
\]
The boundary condition $\Phi(0,\cd)=0$ holds by construction, while we already proved that $\Phi(T,\cd)=0$. Moreover, from \eqref{eq:initial_BVP_F} we have
\[
\left(i\partial_t+\Delta\right)\Phi = V_2\Phi + F,
\]
so it suffices to verify that $V_2\Phi$ and $F$ belong to $L^1(\Sigma)\cap L^{r'}((0,T);L^{p'}(\R^n))$.

First, we check the $L^{r'}((0,T);L^{p'}(\R^n))$-bounds. Since $V_1-V_2,\,V_2\in L^q(\R^n)$ and $\Psi,\Phi\in L^{r}((0,T);L^p(\R^n))$ with $\frac1q+\frac2p=1$, we apply \eqref{eq:holder1} to get
\[
(V_1-V_2)\Psi \in L^{r'}\bigl((0,T);L^{p'}(\R^n)\bigr), \qquad V_2\Phi \in L^{r'}\bigl((0,T);L^{p'}(\R^n)\bigr).
\]

Next, we verify the $L^1(\Sigma)$-bounds. Since $V_j\in L^1(\R^n)\cap L^q(\R^n)$ and $p'\in (1,q)$, we have that $V_1-V_2,\,V_2\in L^{p'}(\R^n)$.  H\"older’s inequality in space yields
\[
\int_{\R^n} |(V_1-V_2)(x)\Psi(t,x)|\,\dd x \le \|V_1-V_2\|_{L^{p'}(\R^n)} \|\Psi(t,\cd)\|_{L^p(\R^n)},
\]
and similarly
\[
\int_{\R^n} |V_2(x)\Phi(t,x)|\,\dd x \le \|V_2\|_{L^{p'}(\R^n)} \|\Phi(t,\cd)\|_{L^p(\R^n)}.
\]
Integrating these inequalities for $t\in(0,T)$ and using that $\Psi,\Phi\in L^{r}\bigl((0,T);L^p(\R^n)\bigr)$, we conclude that
\[
F=(V_1-V_2)\Psi \in L^1(\Sigma), \qquad V_2\Phi \in L^1(\Sigma).
\]
We also record for later use that  $(V_1-V_2)\Psi \ovl{\psi_2}\in L^1(\Sigma)$. This follows from a similar calculation, using $|\psi_2|\le 1+|w_2^{\mathrm{cor}}|$ and $w_2^{\mathrm{cor}}\in L^p(\R^n)$.

Hence, Lemma~\ref{lem:int_by_parts_complex-exp_Lp} applies and yields
\begin{equation}\label{eq:IBP_Phi_exp}
\int_\Sigma e^{i(\lambda^2 t+\lambda\omega_2\cdot x)}(i\partial_t+\Delta)\Phi = 0,
\end{equation}
with $\lambda>0$ and $\omega_2\in\Sph^{n-1}$ as in the statement.

Using that $\Phi$ satisfies \eqref{eq:initial_BVP_F} with $(V_1-V_2)\Psi \ovl{\psi_2}\in L^1(\Sigma)$, we may write
\[
\int_\Sigma (V_1 - V_2)\Psi \ovl{\psi_2}= \int_\Sigma \left(i\d_t + \Delta - V_2\right)\Phi  e^{i\lambda^2 t}\left(e^{i\lambda\omega_2\cdot x}+\ovl{w^{\mathrm{cor}}_2}\right).
\]
Then, expanding the right-hand side and using \eqref{eq:IBP_Phi_exp}, we obtain
\begin{equation}\label{eq:step1}
\int_\Sigma (V_1 - V_2)\Psi \ovl{\psi_2}= - \int_\Sigma \Phi V_2  \ovl{e^{-i(\lambda^2 t+\lambda\omega_2\cdot x)}} + \int_\Sigma  \Phi \ovl{\left(i\d_t+\Delta-\ovl{V_2}\right)\left(e^{-i\lambda^2 t}w^{\mathrm{cor}}_2\right)}.
\end{equation}
However, since $\psi_2=e^{-i\lambda^2 t}(w^{(0)}_2+w^{\mathrm{cor}}_2)$ solves $(i\partial_t+\Delta-\ovl{ V_2})\psi_2=0$, and $e^{-i\lambda^2 t}w^{(0)}_2$ solves the free Schr\"odinger equation, we have
\[
\left(i\d_t+\Delta-\ovl{V_2}\right)\left(e^{-i\lambda^2 t}w^{\mathrm{cor}}_2\right) =\ovl{V_2} e^{-i(\lambda^2 t+\lambda\omega_2\cdot x)}.
\]
The two terms in the right hand side of \eqref{eq:step1} therefore cancel, and we conclude that
\[
\int_\Sigma (V_1 - V_2)\Psi \ovl{\psi_2} = 0.
\]
This completes the proof of the first step.

\vspace{1em}
\noindent\textbf{Step 2:} In this second part of the proof, we use the conclusion established in the first step, applied to solutions $\Psi$ of the Schrödinger equation with potential $V_1$, to deduce that
\[
\int_\Sigma (V_1-V_2)\psi_1\ovl{\psi_2}=0.
\]
To that end, let us consider the final-value problem
\begin{equation}\label{eq:FVP_G}
\begin{cases}
\left(i\d_t + \Delta - \ovl{V_1}\right)\Xi= \left(\ovl{V_1-V_2}\right)\psi_2 \eqqcolon G  \quad &\textrm{in}\quad \Sigma, 
\vspace{.6em}\\
\Xi(T,\cd) = 0   \quad &\textrm{in}\quad \R^n.
\end{cases}
\end{equation}
Using $\eqref{eq:holder1}$ we have $G\in L^{r'}((0,T);L^{p'}(\R^n))$, and hence by the well-posededness discussion in \S\ref{subsec: Stri_WP} there exists a unique solution
\[
\Xi\in C\left([0,T];L^2(\R^n)\right)\cap L^r\left((0,T);L^p(\R^n)\right).
\]
We now verify the assumptions needed to apply Lemma~\ref{lem:CR_lemma4.4_Lp_symmetric} to the solution $\Xi$. As before, we have $ | \psi_2 | \leq 1 + |w^{\mathrm{cor}}_2| $ with $w^{\mathrm{cor}}_2 \in L^p(\R^n)$, and from \eqref{eq:holder1} that $\ovl{G} = (V_1 - V_2)\ovl{\psi_2} \in L^{r'}((0, T); L^{p'}(\R^n))$. Thus $\ovl{G}\in L^{r'}((0,T);L^{p'}(\R^n))$, and since $\Psi\in L^r((0,T);L^p(\R^n))$
the dual pairing $\int_\Sigma \ovl{G} \Psi$ is well defined. Applying the conclusion of the first part of the proof to this $\Psi$, we obtain
\[
\int_\Sigma \ovl{G} \Psi = \int_\Sigma (V_1 - V_2)\Psi\ovl{\psi_2} = 0 .
\]
Using the identity above in Lemma~\ref{lem:CR_lemma4.4_Lp_symmetric} with this choice of $G$ and the solution $\Xi$ of \eqref{eq:FVP_G} yields $\Xi(0,\cd)=0$. Using the assumptions $V_j\in L^1(\R^n)\cap L^q(\R^n)$ and $w^{\mathrm{cor}}_j\in L^p(\R^n)$ for $j\in\{1,2\}$, we have $(V_1-V_2)\psi_1\ovl{\psi_2}\in L^1(\Sigma)$. Moreover, by \eqref{eq:FVP_G} and \eqref{eq:holder}, \eqref{eq:holder1}, 
\[
\left(i\partial_t+\Delta\right)\Xi=\ovl{V_1}\Xi+G \in L^1(\Sigma)\cap L^{r'}\big((0,T);L^{p'}(\R^n)\big).
\]
Since $\Xi$ satisfies~\eqref{eq:FVP_G}, we may write
\[
\int_\Sigma (V_1 - V_2)\psi_1 \ovl{\psi_2} = \int_\Sigma \psi_1 \ovl{\left(i\d_t + \Delta - \ovl{V_1}\right) \Xi}.
\]
Expanding $\psi_1=e^{-i(\lambda^2 t+\lambda\omega_1\cdot x)}+e^{-i\lambda^2 t}w^{\mathrm{cor}}_1$ and replacing in the right hand side of the display above, we obtain
\begin{equation}\label{eq:long}
\begin{split}
\int_\Sigma (V_1 - V_2)\psi_1 \ovl{\psi_2} &= \int_\Sigma e^{-i (\lambda^2 t + \lambda\omega_1\cdot x)} \ovl{\left(i \d_t +\Delta\right ) \Xi} - \int_\Sigma e^{-i (\lambda^2 t + \lambda \omega_1\cdot x)} V_1 \ovl{\Xi} 
\\
&\quad + \int_\Sigma e^{-i\lambda^2 t}w^{\mathrm{cor}}_1 \ovl{\left(i \d_t + \Delta\right) \Xi} - \int_\Sigma e^{-i\lambda^2 t}w^{\mathrm{cor}}_1 V_1 \ovl{\Xi}.
\end{split}
\end{equation}
Now, by Lemma~\ref{lem:int_by_parts_complex-exp_Lp} applied to $\Xi$, we have for the first summand in the right hand side of \eqref{eq:long}
\[
\int_\Sigma e^{-i (\lambda^2 t + \lambda\omega_1\cdot x)} \ovl{\left(i \d_t +\Delta\right) \Xi}=0.
\]
For the third term in the right hand side of \eqref{eq:long}, we apply Proposition~\ref{cor:integration_by_parts_unbounded_no-endopint2}
with $u=\Xi$ and $v=e^{-i\lambda^2 t}w^{\mathrm{cor}}_1$ and obtain
\[
\int_\Sigma e^{-i\lambda^2 t}w^{\mathrm{cor}}_1 \ovl{\left(i \d_t + \Delta\right)\Xi} = \int_\Sigma \ovl{\left(i\d_t+\Delta\right)\left(e^{-i\lambda^2 t}w^{\mathrm{cor}}_1\right)} \Xi.
\]
Since $\psi_1=e^{-i\lambda^2 t}(w^{(0)}_1+w^{\mathrm{cor}}_1)$ solves $(i\partial_t+\Delta-V_1)\psi_1=0$ and $(i\partial_t+\Delta) (e^{-i\lambda^2 t}w^{(0)}_1)=0$,
it follows that
\[
\left(i\d_t+\Delta\right)\left(e^{-i\lambda^2 t}w^{\mathrm{cor}}_1\right) = e^{-i\lambda^2 t}V_1\left(w^{(0)}_1+w^{\mathrm{cor}}_1\right) = V_1 e^{-i (\lambda^2 t + \lambda\omega_1\cdot x)}+ V_1 e^{-i\lambda^2 t}w^{\mathrm{cor}}_1 .
\]
Substituting this identity into the previous display, we get
\[
\int_\Sigma e^{-i\lambda^2 t}w^{\mathrm{cor}}_1 \ovl{\left(i \d_t + \Delta\right)\Xi} = \int_\Sigma V_1 e^{-i \left(\lambda^2 t + \lambda\omega_1\cdot x\right)} \ovl{\Xi} + \int_\Sigma V_1 e^{-i\lambda^2 t}w^{\mathrm{cor}}_1 \ovl{\Xi}.
\]
Therefore the remaining three terms in \eqref{eq:long} cancel, and we conclude that
\[
\int_\Sigma (V_1 - V_2)\psi_1 \ovl{\psi_2}=0,
\]
which proves~\eqref{eq:int_by_parts_time-harmonic}.
\end{proof}

\section{The proof of Theorem~\ref{th:Lq}}\label{sec:unbounded_endpoint} We split the proof of Theorem~\ref{th:Lq} into two subsections. The first contains the proof of the non-endpoint case, namely the case that $V\in L^q(\R^n)$ in dimensions $n\geq 2$, for some $q>n/2$. In the subsequent section we will provide the proof of the endpoint case $V\in L^{n/2}(\R^n)$ for dimension $n\geq 3$.

\subsection{Proof of Theorem~\ref{th:Lq} in the non-endpoint case}\label{subsec:proof_uniqueness_unbounded} Here we consider potentials $V_1,V_2\in L^q(\R^n)$ for some $q>n/2$ and $n\geq 2$. By the discussion in Remark~\ref{rmrk:numer}, we may and do assume that $n/2<q\leq (n+1)/2$, and define $p$ through the relation $\frac{1}{q}=1-\frac{2}{p}$.

 Let $F\coloneqq V_1 - V_2 \in L^1(\R^n)\cap L^q(\R^n)$.  Since $1<p'<q$, we also have that $F\in L^{p'}(\R^n)$. Given $\xi \in \R^n$ consider $\nu \in \Sph^{n-1}$ such that $\xi \cdot \nu = 0$.
For $\lambda \geq |\xi|/2$ we define
\[
\begin{split}
\omega_1 \coloneqq \frac{1}{\lambda} \frac{\xi}{2} + \left( 1 - \frac{|\xi|^2}{4 \lambda^2} \right)^{1/2} \nu,\qquad \omega_2 \coloneqq -\frac{1}{\lambda} \frac{\xi}{2} +\left( 1 - \frac{|\xi|^2}{4 \lambda^2} \right)^{1/2} \nu.
\end{split}
\]
Note that $\omega_1$ and $\omega_2$ belong to $\Sph^{n-1}$ with $\omega_1 - \omega_2 = \xi/\lambda$. We construct the solutions $\psi_1,\psi_2$ solving $(i\partial_t+\Delta-V_1)\psi_1=0$, and $(i\partial_t+\Delta-\ovl{V_2})\psi_2=0$ of the form
\[
\psi_j\coloneqq \psi_{V_j} ^{\lambda,\omega_j}=e^{-i\lambda^2t}w_j(x)=e^{-i\lambda^2t}(w^{(0)}_j(x)+w^{\mathrm{cor}}_j(x)),\qquad j\in\{1,2\},\qquad (t,x)\in\R\times \R^n,
\]
where we remember that $w^{(0)}_j(x)=e^{-i\lambda \omega_j\cdot x}$ and the functions $w^{\mathrm{cor}}_j$ are constructed in Corollary~\ref{cor:solutions} with $\lambda > \max(|\xi|/2, \lambda_{V_j})$. 

In particular, the corrections $w^{\mathrm{cor}}_j$ satisfy the following $L^p$-estimates
\begin{equation}\label{eq:decay_remainder_Lp}
\left\| w^{\mathrm{cor}}_j \right\|_{L^p(\R^n)} \lesssim  \frac{1}{\lambda^{n(\frac{2}{n} - \frac{1}{q})}} \| V_j \|_{L^{p'} (\R^n)},\qquad j\in\{1,2\},
\end{equation}
for all $\lambda \geq \max(|\xi|/2, \lambda_{V_j} + \delta)$. Since $ \mathcal{U}^1_T=\mathcal{U}^2_T$, plugging the solutions $\psi_1,\psi_2$ into the orthogonality relation of Proposition~\ref{pr:int_by_parts_time-harmonic_solutions}, and noting that the time-dependent phases cancel, we infer that
\begin{equation}\label{eq:leading=remainder_unbounded}
\int_{\R^n} F w^{(0)}_1 \ovl{w^{(0)}_2}  = - \int_{\R^n} F \left[w^{(0)}_1 \ovl{w^{\mathrm{cor}}_2} + w^{\mathrm{cor}}_1 \ovl{w^{(0)}_2} + w^{\mathrm{cor}}_1 \ovl{w^{\mathrm{cor}}_2}\right].
\end{equation}
For the left-hand side of \eqref{eq:leading=remainder_unbounded}, note that  
\[
\int_{\R^n} F w^{(0)}_1 \ovl{w^{(0)}_2}  = \int_{\R^n} F(x) e^{-i \lambda( \omega_1-\omega_2)\cdot x}\,\dd x  = \int_{\R^n} F(x) e^{-i \xi \cdot x} \,\dd x = (2 \pi)^{n/2} \wh{F}(\xi).
\]
Using H\"older's inequality along with the estimates \eqref{eq:decay_remainder_Lp}, the right-hand side of \eqref{eq:leading=remainder_unbounded} is readily estimated as follows
\[
\begin{split}
|\wh{F}(\xi)| &\lesssim  \| F \|_{L^{p'}(\R^n)} \left\|w^{\mathrm{cor}}_2\right\|_{L^p(\R^n)} + \left\| F \right\|_{L^{p'}(\R^n)} \left\|w^{\mathrm{cor}}_1\right\|_{L^p(\R^n)} 
\\
&\qquad + \| F \|_{L^q(\R^n)} \left\| w^{\mathrm{cor}}_1 \right\|_{L^p(\R^n)} \left\| w^{\mathrm{cor}}_2 \right\|_{L^p(\R^n)} 
\\
&\lec \frac{1}{\lambda^{n(\frac{2}{n} - \frac{1}{q})}} \| F \|_{L^{p'}(\R^n)} \left(\| V_1 \|_{L^{p'}(\R^n)} +  \| V_2 \|_{L^{p'}(\R^n)} \right) 
\\
&\qquad + \frac{1}{\lambda^{2n(\frac{2}{n} - \frac{1}{q})}} \| F \|_{L^{q}(\R^n)}  \| V_1 \|_{L^{p'}(\R^n)} \| V_2 \|_{L^{p'}(\R^n)} \\
&\lec  \frac{1}{\lambda^{n(\frac{2}{n} - \frac{1}{q})}} + \frac{1}{\lambda^{2n(\frac{2}{n} - \frac{1}{q})}} .
\end{split}
\]
Letting $\lambda\to \infty $ we obtain that $\wh{F}(\xi) = 0$ since $q>n/2$. Since $\xi\in\R^n$ was arbitrary this implies that $V_1=V_2$ almost everywhere in $\R^n$, thus concluding the proof of Theorem~\ref{th:Lq} in the case $q>n/2$ and any dimension $n\geq 2$.

\subsection{Proof of Theorem~\ref{th:Lq}: the endpoint case}\label{subsec: proof_uniqueness_endpoint} Let $V_1,V_2\in L^1(\R^n)\cap L^{n/2}(\R^n)$ and define $F\coloneqq V_1 - V_2$. Given $\xi \in \R^n$ we consider $\nu \in \Sph^{n-1}$ such that $\xi \cdot \nu = 0$ and $\lambda \geq \max\{\lambda_{V_1},\lambda_{V_2},|\xi|/2\}$, where $\lambda_{V_1},\lambda_{V_2}$ are the constants corresponding to $V_1,V_2$ from Corollary~\ref{cor:solutionsVcrit}. As in \S\ref{subsec:proof_uniqueness_unbounded}, we consider the vectors
\[
\omega_1 \coloneqq \frac{1}{\lambda} \frac{\xi}{2} + \left( 1 - \frac{|\xi|^2}{4 \lambda^2} \right)^{1/2} \nu,\qquad \omega_2 \coloneqq -\frac{1}{\lambda} \frac{\xi}{2} + \left( 1 - \frac{|\xi|^2}{4 \lambda^2} \right)^{1/2} \nu,
\]
and note that $\omega_1,\omega_2\in\mathbb S^{n-1}$ and $\omega_1 - \omega_2 = \xi/\lambda$.  
We now construct the stationary state solutions as in Corollary~\ref{cor:solutionsVcrit}
\[
\psi_j\coloneqq \psi_{V_j} ^{\lambda,\omega_j}=e^{-i\lambda^2t}w_j(x)=e^{-i\lambda^2t}\left(w^{(0)}_j(x)+w^{\mathrm{cor}}_j(x)\right),\qquad j\in\{1,2\},\qquad (t,x)\in\R\times \R^n,
\]
and we recall the estimates
\begin{equation}\label{eq:psiestimates}
\left\|w^{\mathrm{cor}}_j\right\|_{X_\lambda ^*}\lesssim \|V_j\|_{X_\lambda}\leq \lambda^{-\frac{1}{n+1}} \|V_j\|_{L^1(\R^n)\cap L^{n/2}(\R^n)},\qquad j\in\{1,2\},
\end{equation}
also proved in Corollary~\ref{cor:solutionsVcrit}. Since $ \mathcal{U}^1_T=\mathcal{U}^2_T$, plugging the solutions $\psi_1,\psi_2$ into the orthogonality relation of 
Proposition~\ref{pr:int_by_parts_time-harmonic_solutions} and repeating the calculations from \S\ref{subsec:proof_uniqueness_unbounded}, we have 
\[
\begin{split}
|\wh{F}(\xi)| &\leq \frac{1}{(2\pi)^{n/2}} \int_{\R^n} \left|F w^{(0)}_1 \overline{w^{\mathrm{cor}}_2}\right| + \int_{\R^n} \left|F \overline{w^{(0)}_2} w^{\mathrm{cor}}_1\right| + \int_{\R^n} \left|F w^{\mathrm{cor}}_1 \overline{w^{\mathrm{cor}}_2}\right| 
\\
&\lesssim \left\|Fw^{(0)}_1\right\|_{X_\lambda} \left\|w^{\mathrm{cor}}_2\right\|_{\X_\lambda ^*}+\left\|Fw^{(0)}_2\right\|_{X_\lambda} \left\|w^{\mathrm{cor}}_1\right\|_{\X_\lambda ^*}+\|F\|_\lambda\|w^{\mathrm{cor}}_1\|_{\X_\lambda ^*} \left \|w^{\mathrm{cor}}_2\right \|_{\X_\lambda ^*},
\end{split}
\]
where we have used the duality of $X_\lambda$ and $X_\lambda ^*$ and the H\"older-type inequality of Lemma~\ref{lem:multiplication_V_endpoint_X_norm}. Using the estimates \eqref{eq:psiestimates} and the fact that $|w^{(0)}_1|=|w^{(0)}_2|=1$ we get
\begin{align*}
|\wh{F}(\xi)| &\lesssim 
 \|F\|_{\X_\lambda} \lambda^{-\frac{1}{n+1}}(\|V_1\|_{L^1(\R^n)\cap L^{n/2}(\R^n)} + \|V_2\|_{L^1(\R^n)\cap L^{n/2}(\R^n)}) \\
&+ \|F\|_{\lambda} \lambda^{-\frac{2}{n+1}} \|V_1\|_{L^1(\R^n)\cap L^{n/2}(\R^n)} \|V_2\|_{L^1(\R^n)\cap L^{n/2}(\R^n)} \\
&\lesssim\lambda^{-\frac{2}{n+1}} \left(\|V_1\|_{L^1(\R^n)\cap L^{n/2}(\R^n)} +\|V_2\|_{L^1(\R^n)\cap L^{n/2}(\R^n)}\right) ^2 
(1 + \|F\|_{\lambda}) \to 0
\end{align*}
as $\lambda \to \infty$. Hence $\wh{F}(\xi) = 0$, and since $\xi\in\R^n$ was arbitrary we conclude that $F=0$ and hence that $V_1=V_2$ almost everywhere in $\R^n$, thus completing the proof of the endpoint case of Theorem~\ref{th:Lq}.

\sloppy
\begin{acknowledgements}
M. Ca\~nizares is partially supported by grant PID2024-156267NB-I00 funded by MICIU/AEI/10.13039/501100011033 and cofunded by the European Union. P. Caro is supported by grant PID2024-156267NB-I00 funded by MICIU/AEI/10.13039/501100011033 and cofunded by the European Union, as well as BCAM-BERC 2022-2025 and the BCAM Severo Ochoa CEX2021-001142-S. T. Zacharopoulos is supported by the grant 10.46540/3120-00003B from Independent Research Fund Denmark. I. Parissis is partially supported by grant PID2024-156267NB-I00 funded by MICIU/AEI/10.13039/501100011033 and cofunded by the European Union, grant IT1615-22 of the Basque Government and IKERBASQUE.
\end{acknowledgements}

\bibliography{references}{} 
\end{document}